\numberwithin{equation}{section}
\theoremstyle{plain}
\newtheorem{theorem}{Theorem}[section]
\newtheorem{lemma}[theorem]{Lemma}
\newtheorem{proposition}[theorem]{Proposition}
\theoremstyle{definition}
\newtheorem{remark}[theorem]{Remark}
\def\eps{\epsilon}
\def \Z {{\mathbb Z}}
\def \Q {{\mathbb Q}}
\def \R {{\mathbb R}}
\def \C {{\mathbb C}}
\newcommand{\integers}{\mathbb{Z}}
\newcommand{\complexes}{\mathbb{C}}
\newcommand{\rationals}{\mathbb{Q}}
\DeclareMathOperator{\li}{li}
\let\abs\undefined
\DeclarePairedDelimiter{\abs}{\lvert}{\rvert}
\newcommand{\uphs}{\mathbb{H}^{3}}
\newcommand{\hmodgs}{\Gamma\backslash\uphs}
\newcommand{\zin}{\mathbb{Z}[i]\setminus\{0\}}
\newcommand{\pslc}{\mathrm{PSL}(2, \complexes)}
\newcommand{\pslzi}{\mathrm{PSL}(2, \integers[i])}
\title[Second Moment of the PGT]{Second Moment of
the Prime Geodesic Theorem for $\mathrm{PSL}(2, \mathbb{Z}[i])$}
\author{Dimitrios Chatzakos}
\address{
    Universit\'e de Lille 1 Sciences et Technologies
    and
    Centre Européen pour les Mathématiques, la Physique et leurs interactions (CEMPI),
    Cit\'e Scientifique, 59655 Villeneuve d’ Ascq C\'edex, France
    }
\email{Dimitrios.Chatzakos@math.univ-lille1.fr}
\author{Giacomo Cherubini}
\address{
    Alfr\'ed R\'enyi Institute of Mathematics,
    Hungarian Academy of Sciences,
    POB 127, Budapest H-1364, Hungary;
    MTA R\'enyi Int\'ezet Lend\"ulet Automorphic Research Group
    }
\email{cherubini.giacomo@renyi.mta.hu}
\author{Niko Laaksonen}
\address{
    Alfr\'ed R\'enyi Institute of Mathematics,
    Hungarian Academy of Sciences,
    POB 127, Budapest H-1364, Hungary;
    MTA R\'enyi Int\'ezet Lend\"ulet Automorphic Research Group
    }
\email{laaksonen.niko@renyi.mta.hu}
\thanks{
    The first author is currently supported by the Labex CEMPI (ANR-11-LABX-0007-01).
    He also wishes to thank the Mathematics department of King's College London
    for the hospitality during the spring and the summer of 2017 and the financial support
    through the European Union’s Seventh Framework Programme (FP7/2007-2013) / ERC grant agreement no. 335141 \emph{Nodal}.
    The second and third author were supported by the MTA R\'enyi Int\'ezet Lend\"ulet Automorphic Research Group.
    The second author was also partially supported by a \emph{Ing. Giorgio Schirillo}
    postdoctoral grant (2017--2018) from INdAM.
    The third author would also like to thank the Department of Mathematics and
    Statistics at McGill University and the Centre de Recherches Math\'ematiques
    for their hospitality and support.
    }
\keywords{Prime Geodesic Theorem, Selberg trace formula, Kuznetsov trace formula, Kloosterman sums}
\subjclass[2010]{Primary 11F72; Secondary 11M36, 11L05}
\date{\today}
\begin{document}

\begin{abstract}
The remainder $E_\Gamma(X)$ in the Prime Geodesic Theorem
for the Picard group $\Gamma = \mathrm{PSL}(2,\mathbb{Z}[i])$
is known to be bounded by $O(X^{3/2+\eps})$
under the assumption of the Lindel\"of hypothesis
for quadratic Dirichlet $L$-functions over Gaussian integers.
By studying the second moment of $E_\Gamma(X)$, we show that
on average the same bound holds unconditionally.
\end{abstract}

\maketitle


\section{Introduction}

Let $M = \Gamma \backslash \mathbb{H}^2$ be a hyperbolic surface,
with $\Gamma$ a cofinite Fuchsian group, and denote by $\pi_{\Gamma}$ the counting
function of the primitive length spectrum of $M$, i.e.~$\pi_{\Gamma}(X)$ is
the number of primitive closed geodesics on $M$ of length at most $\log X$.
The study of $\pi_{\Gamma}$ has a long history dating back to works of
Huber~\cite{huber_zur_1961,huber_zur_1961-1},
Selberg~\cite[Chapter~10]{iwaniec_spectral_2002}
and others.
In particular, for surfaces of arithmetic type, much progress has been made
in estimating the asymptotic error term related to $\pi_{\Gamma}$, see
e.g.~\cite{iwaniec_prime_1984},
\cite{luo_quantum_1995}, \cite{soundararajan_prime_2013}.
In three dimensions, that is $M=\hmodgs$ where
$\Gamma\subset\pslc$ is a cofinite Kleinian group, we know that~\cite{gangolli_1980}
\begin{equation}\label{eq:pibound}
    \pi_{\Gamma}(X)\sim \li(X^{2}).
\end{equation}
In analogy with the classical prime number theory, it is more convenient to
work with the hyperbolic
analogue of the Chebyshev function, which is defined as
\[
\psi_{\Gamma}(X)=\sum_{N(P)\leq X}\Lambda_{\Gamma}(N(P)).
\]
Here the sum runs over hyperbolic and loxodromic conjugacy classes of $\Gamma$
of norm at most $X$ and $\Lambda_{\Gamma}$ denotes the hyperbolic von Mangoldt
function. That is,
$\Lambda_{\Gamma} (N(P)) = \log N(P_0)$, if $P$ is a power of a primitive
hyperbolic (or loxodromic) conjugacy class $P_0$, and zero otherwise.
The classical bound for the remainder term in~\eqref{eq:pibound} was given by
Sarnak~\cite{sarnak_arithmetic_1983} in 1983. In the arithmetic case,
$\Gamma=\pslzi$, his result says that
\begin{equation}\label{eq:sarnakbound}
    \psi_{\Gamma}(X) = \frac{X^{2}}{2} + E_{\Gamma}(X),\quad
    E_{\Gamma}(X)\ll_{\epsilon}X^{5/3+\epsilon}.
\end{equation}
The estimate~\eqref{eq:sarnakbound} for the error term is actually valid for all
cofinite Kleinian groups, provided that the contribution from possible
small eigenvalues is included in the main term.
Sarnak's pointwise bound~\eqref{eq:sarnakbound} has been improved for the Picard group
in~\cite{koyama_prime_2001,balkanova_prime_2017,balkanova_2018}.
The current best unconditional bound is due to Balkanova
and Frolenkov~\cite{balkanova_2018}, who showed that
\begin{equation}\label{eq:balkanovabound}
    E_{\Gamma}(X)\ll_{\epsilon}X^{\eta+\epsilon},\quad
    \eta=\frac{3}{2}+\frac{103}{1024}.
\end{equation}
By assuming the Lindel\"of hypothesis for quadratic Dirichlet $L$-functions
over Gaussian integers, they obtain $\eta=3/2$.
It is not clear how far this is from the truth (see the discussion in Remarks 1.5 and 3.1 in \cite{balkanova_prime_2017}).

The main result of this paper is that the exponent $3/2+\epsilon$ holds on
average. This is achieved by studying the second moment of the error term.
Namely, we prove the following theorem.
\begin{theorem}\label{intro:thm1}
Let $V\geq Y \gg 1$ and $\eps>0$. Then
\begin{equation}\label{eq:thm1}
    \frac{1}{Y} \int_V^{V+Y} |E_{\Gamma}(X)|^2dX \ll V^{3+\eps} \left(\frac{V}{Y}\right)^{2/3}.
\end{equation}
\end{theorem}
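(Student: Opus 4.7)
The plan is to expand $E_\Gamma(X)$ as a spectral sum via the Selberg trace formula, then evaluate the mean square over the short window $[V,V+Y]$ by splitting into diagonal and off-diagonal contributions. In smoothed form the spectral explicit formula reads
\[
E_\Gamma(X) = \sum_{s_j} \frac{X^{s_j}}{s_j} + (\text{lower order terms}),
\]
with $s_j = 1 + i t_j$ and $\lambda_j = 1 + t_j^2$ running over the discrete Laplace spectrum on $\hmodgs$. The Weyl law in dimension three is $\#\{|t_j| \leq T\} \asymp T^3$, so this sum does not converge absolutely and must be truncated at a free parameter $T$ to be optimised.

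The first step is a double smoothing: replace the sharp cutoff $\psi_\Gamma$ (scale $\Delta$) and the hard window $\mathbf{1}_{[V,V+Y]}$ (scale $Y$) by smooth bumps, so that the resulting spectral expansion has Fourier-transform weights concentrated on $|t_j|\lesssim T$ and converges rapidly. The smoothing errors are controlled using the trivial bound on the main term $X^2/2$ together with the pointwise estimate \eqref{eq:balkanovabound}, which constrains $\Delta$ and the tail $|t_j|>T$. After squaring and integrating in $X$, the diagonal $j=k$ produces
\[
\frac{1}{Y}\int_V^{V+Y} |E_\Gamma(X)|^2\,dX \;\asymp\; V^2 \sum_{|t_j|\leq T} \frac{1}{1+t_j^2} \;\asymp\; V^2 T
\]
by the Weyl law; the choice $T \asymp V^{1+\eps}(V/Y)^{2/3}$ then exactly matches the bound in \eqref{eq:thm1}.

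The main obstacle lies in the off-diagonal terms, where integration in $X$ weights pairs $j\neq k$ by $\min(YV^2,\,V^3/|t_j-t_k|)$. The natural tool, and likely the one exploited by the authors in view of the keywords of the paper, is the Kuznetsov trace formula for $\pslzi$: it transforms the off-diagonal spectral sum into a sum of Kloosterman sums over the Gaussian integers, and Weil-type bounds for these, combined with a dyadic analysis of the Bessel-transform weights, should produce enough cancellation to keep the off-diagonal contribution within the size of the diagonal. The technical heart of the argument is the uniformity of this Kloosterman-sum estimate in $T$ and in the window length $Y$; all other ingredients (Weyl law, double smoothing, tail truncation) are reasonably standard in this circle of ideas.
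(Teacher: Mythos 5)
Your high-level architecture --- an explicit formula for $E_\Gamma(X)$, truncation of the spectral sum at a parameter $T$, mean square over $[V,V+Y]$ split into diagonal and off-diagonal, and the Kuznetsov formula for $\pslzi$ to treat the off-diagonal --- does match the paper's strategy in outline. But the quantitative heart of your argument contains a genuine gap. You assert that the Kloosterman-sum analysis ``should produce enough cancellation to keep the off-diagonal contribution within the size of the diagonal.'' That is not what the paper proves, and it is not achievable by these methods: the actual output of the Kuznetsov analysis is Theorem~\ref{intro:thm2}, namely $\int_V^{V+Y}|S(T,X)|^2\,dX\ll T^{3+\eps}V^{3/2+\eps}$, valid only for $T\ll V^{1/2-\eps}$, and this exceeds the pure diagonal contribution $\asymp YT^3$ by a factor of at least $V^{3/2}/Y\geq V^{1/2}$. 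If the off-diagonal really were dominated by the diagonal for $T$ up to $V^{1/2}$, one would already obtain $\tfrac{1}{Y}\int_V^{V+Y}|E_\Gamma(X)|^2\,dX\ll V^{3+\eps}$ uniformly in $Y$, strictly stronger than \eqref{eq:thm1}; that is essentially a Lindel\"of-quality statement and is exactly the part nobody knows how to prove. So the sentence you describe as ``the technical heart'' is not a routine uniformity issue but the place where your claimed bound fails.

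Relatedly, your choice $T\asymp V^{1+\eps}(V/Y)^{2/3}\geq V$ is inadmissible: Nakasuji's explicit formula \eqref{eq:explicit} requires $T<X^{1/2}$, and the second-moment input is only available for $T\ll V^{1/2-\eps}$. The exponent in \eqref{eq:thm1} does not arise as a diagonal count $V^2T$ at a large $T$; it arises from balancing the truncation error $O(V^4T^{-2}\log^2V)$ of the explicit formula against the contribution $V^{7/2+\eps}T^{1+\eps}Y^{-1}$ coming from the (weaker than diagonal-quality) bound of Theorem~\ref{intro:thm2} after partial summation, which forces $T=V^{1/6}Y^{1/3}\leq V^{1/2}$. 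The fact that $V^2T$ coincides with the right-hand side of \eqref{eq:thm1} at your choice of $T$ is numerology rather than the mechanism of the proof. To repair the argument you would need to replace the off-diagonal-equals-diagonal claim with a proved second-moment estimate for the spectral exponential sum in the admissible range of $T$, and then re-optimize $T$ against the explicit-formula error, which is precisely what the paper does in Section~\ref{S5}.
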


Theorem~\ref{intro:thm1} follows from a short interval second moment estimate for
the spectral exponential sum $S(T, X)$, which is defined as
\[
    S(T, X) = \sum_{0<r_{j}\leq T}X^{ir_{j}},
\]
where $\lambda_{j}=1+r_{j}^{2}$ are the (embedded) eigenvalues of the
Laplace--Beltrami operator $\Delta$ on $M$.
\begin{theorem}\label{intro:thm2}
Let $V\geq Y\gg 1$ and $T\ll V^{1/2-\eps}$. Then
\[
\frac{1}{Y} \int_V^{V+Y} |S(T,X)|^2 dX \ll T^{3+\eps} V^{3/2+\eps} Y^{-1}.
\]
\end{theorem}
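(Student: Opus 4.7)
\emph{Proof plan.} The plan is to open the square in $|S(T,X)|^2$, integrate against a smooth cutoff on $[V, V+Y]$, and separate the diagonal from the off-diagonal pairs of eigenvalues before invoking a trace-formula input on the latter. To begin, I would replace $\mathbf{1}_{[V, V+Y]}$ by a smooth non-negative majorant $w$, supported on a slight enlargement of the interval and satisfying $\|w^{(k)}\|_{\infty} \ll_k Y^{-k}$, bounding the discrepancy via the pointwise estimate~\eqref{eq:balkanovabound}. Expanding the square and swapping sum and integral gives
\[
\int w(X)\,|S(T,X)|^{2}\,dX \;=\; \sum_{0 < r_j,\, r_k \leq T} W(r_j - r_k), \qquad W(u) := \int w(X)\,X^{iu}\,dX,
\]
and repeated integration by parts yields $W(u) \ll_N Y(1+|u|Y/V)^{-N}$ for every $N \geq 0$.

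The diagonal contribution $r_j = r_k$ equals $W(0)\cdot\#\{j : 0 < r_j \leq T\}$, which by Weyl's law for $\pslzi$ is $\asymp YT^{3}$ and already matches the claimed bound after the $1/Y$ average. The decay of $W$ effectively restricts the remaining pairs to $|r_j - r_k| \lesssim V^{1+\eps}/Y$, so the real task is the off-diagonal sum on this thin near-diagonal strip. To handle it, I would fix $r_k$ and apply the Kuznetsov trace formula for $\pslzi$ to the inner spectral sum $\sum_{r_j \leq T} W(r_j - r_k)$, converting it into a sum of Kloosterman sums $S(m,n;c)$ over Gaussian integers weighted by Bessel-type transforms of $W$. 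Combining Weil's bound on $S(m,n;c)$ with a dyadic decomposition in the modulus $c$ and careful control of the resulting weights should produce an estimate uniform in $r_k$; summing over $r_k$ then delivers the bound.

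The hard part will be the Bessel-kernel analysis on the Kloosterman side. One has to track precisely both the oscillatory regime and the exponentially decaying regime of the transforms that appear in Kuznetsov's formula for $\pslzi$, and balance their contributions against the localization induced by $W$. The condition $T \ll V^{1/2-\eps}$ is what places us in the regime where a saving of $V^{1/2}$ over the trivial estimate can be extracted, ultimately producing the on-average analogue of the conditional $X^{3/2+\eps}$ pointwise bound of~\cite{balkanova_2018}.
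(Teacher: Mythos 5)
There is a genuine gap at the heart of your plan: the Kuznetsov trace formula cannot be applied to the bare spectral sum $\sum_{r_j\leq T}W(r_j-r_k)$. The spectral side of Kuznetsov (Theorem~\ref{Kuznetsovformula}) is $\sum_j h(r_j)\,r_j\rho_j(n)\overline{\rho_j(m)}/\sinh\pi r_j$; without the Fourier coefficients $\rho_j(n)\overline{\rho_j(m)}$ present, there is no identity converting a sum of the form $\sum_j h(r_j)$ into Kloosterman sums. A weight-free sum over the $r_j$ is the province of the Selberg trace formula, whose geometric side consists of conjugacy-class (length-spectrum) terms, not Kloosterman sums, and that route yields only the weaker bound of~\cite[Theorem~1.2]{balkanova_prime_2017}. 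The step you describe as ``fix $r_k$ and apply the Kuznetsov trace formula to the inner spectral sum'' therefore does not get off the ground, and it is precisely the point where the main idea of the paper is needed: one artificially inserts the coefficients by averaging over $n$ against a smooth weight $f(\abs{n})$ and using that the Rankin--Selberg $L$-function $L(s,u_j\otimes u_j)=\sum_n\abs{v_j(n)}^2N(n)^{-s}$ has a simple pole at $s=1$ with residue an absolute constant, so that $\tfrac1N\sum_n f(\abs{n})\abs{v_j(n)}^2$ is essentially $1$ uniformly in $j$ (see~\eqref{0612:eq001}). This insertion is not free: shifting the contour to $\Re(s)=1/2$ produces the extra term $M_1(s)=\sum_j h(r_j)L(s,u_j\otimes u_j)$ in~\eqref{eq003}, whose mean square in $X$ must then be controlled (Proposition~\ref{proposition-average-rankin-selberg}, which rests on a second-moment bound for symmetric-square $L$-functions). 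None of this appears in your outline, and without it the off-diagonal analysis has no starting point.

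Two further remarks. First, your diagonal count is fine ($W(0)\cdot\#\{r_j\leq T\}\asymp YT^3$, acceptable since $Y\leq V\leq V^{3/2}$), and your localization $\abs{r_j-r_k}\lesssim V^{1+\eps}/Y$ is the correct heuristic; but note that the paper's final bound is dominated by a term \emph{independent of} $Y$ (the $NV^2$ term in Proposition~\ref{proposition-kloosterman-sums}), so the short-interval localization is not where the saving comes from. Second, the order of operations in the paper is the reverse of yours: Kuznetsov is applied first to the smoothed, coefficient-weighted sum for each fixed $X$, and only afterwards is the second moment in $X$ taken on the Kloosterman-sum side, where the oscillation is captured by an exact $K_0$-Bessel formula for the transform (Lemmas~\ref{lemma:MellinK0} and~\ref{lemma:productK0}) together with a Hardy--Littlewood--P\'olya argument for the off-diagonal moduli. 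If you repair the missing coefficient-insertion step, your ``integrate first, then Kuznetsov'' order could in principle be made to work, but you would still need the free parameter $N$ (the size of $n$) and its optimization $N=T^3V^{-1/2}$ to balance the Kloosterman contribution against the Rankin--Selberg term; that balance is what produces the exponent $T^3V^{3/2}$.
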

The connection between the Prime Geodesic Theorem and $S(T,X)$ is given by
the explicit formula of Nakasuji, see~\eqref{eq:explicit}.

\begin{remark}\label{intro:rmk1}
Note that the bound for arbitrary $Y$ in Theorem~\ref{intro:thm2}
follows by positivity from the estimate over the dyadic interval $[V,2V]$.
Nevertheless, Theorem~\ref{intro:thm2} allows us to prove a nontrivial
result in short intervals in Theorem~\ref{intro:thm1}
since the parameter $T$ can depend on $Y$.
Despite this, we will carry out the proof of Theorem~\ref{intro:thm2} in the
interval $[V,V+Y]$
in order to highlight how the dependence in $Y$ gets absorbed into the
final bound.
\end{remark}

As a corollary of Theorem~\ref{intro:thm1} we recover the pointwise bound
$E_{\Gamma}(X)\ll_{\epsilon}X^{13/8+\epsilon}$ of~\cite[Theorem~1.1]{balkanova_prime_2017}.
Furthermore, our second moment bound~\eqref{eq:thm1} has immediate consequences
analogous to Corollary~1.3 and Equation~(1.3) in~\cite{balkanova_prime_2017},
but we will not write them here explicitly.
Finally, we observe that Theorem~\ref{intro:thm1} implies that the short interval estimate
\[
\frac{1}{V} \int_{V}^{2V} \abs*{\psi_{\Gamma}(X)-\psi_{\Gamma}(X-\eta X)-\eta(1-\eta/2) X^{2}}^{2} dX
\ll_{\epsilon} V^{3+\epsilon},
\]
is valid for all $0\leq\eta\leq 1$.
In other words, the approximation $\psi_\Gamma(X)-\psi_\Gamma(X-\eta X)=\eta(1-\eta/2)X^2$
holds with the error term $O(X^{3/2+\eps})$ in a square mean sense.

A weaker second moment estimate, which is valid for \emph{all}
cofinite $\Gamma$, was obtained in~\cite[Theorem~1.2]{balkanova_prime_2017},
where the authors showed for $V\geq Y \gg 1$ that
\[
    \frac{1}{Y}\int_{V}^{V+Y}\abs{E_{\Gamma}(X)}^{2}dX
\ll
V^{16/5}\left(\frac{V}{Y}\right)^{2/5}(\log V)^{2/5}.
\]
This was proved by using the Selberg trace formula and it is of analogous
strength to Sarnak's bound (see Remark~1.5 in~\cite{balkanova_prime_2017}).
In our proof we will instead use the Kuznetsov trace formula (see~\cite{motohashi_trace_1996,motohashi_trace_1997})
for $\pslzi$, which allows us to get stronger estimates.
A key component of our proof is a careful analysis of integrals involving
multiple Bessel functions. In particular, by relying on exact formulas, we
avoid having to deal with the oscillatory integrals that appear in the proof of
Koyama~\cite{koyama_prime_2001} for the pointwise bound.
We also incorporate some ideas of~\cite{balog_2018}
and~\cite{cherubini_mean_2017} from two dimensions.

The paper is organized as follows. We begin by stating our main tool,
the Kuznetsov formula, in section~\ref{SK}.
Then, in section \ref{S2}, we give a detailed outline
of the proof of Theorem~\ref{intro:thm2} under the assumption of two key estimates,
which are stated as Propositions~\ref{proposition-kloosterman-sums}
and~\ref{proposition-average-rankin-selberg}. In sections~\ref{S3}
and~\ref{S4} we prove these two estimates. Finally, in section~\ref{S5}
we show how to recover Theorem~\ref{intro:thm1} from Theorem~\ref{intro:thm2}.

\section{Kuznetsov formula}\label{SK}
The Kuznetsov trace formula relates the Fourier
coefficients of cusp forms to Kloosterman sums. For Gaussian integers,
Kloosterman sums are defined as
\[
S_{\Q(i)}(n,m,c) = \sum_{a\in (\Z[i]/(c))^\times} e\big(\langle
m,a/c\rangle\big) e\big(\langle n,a^{*}/c\rangle\big),
\]
where $m, n,c\in\mathbb{Z}[i]$, $c\neq 0$; $a^{*}$ denotes the inverse of
$a$ modulo the ideal $(c)$; and $\langle x,y\rangle$ denotes the
standard inner product on $\R^2\cong \C$.
The Kloosterman sums obey Weil's bound~\cite[(3.5)]{motohashi_trace_1997}
\begin{equation}\label{weil}
    |S_{\rationals(i)}(n,m,c)| \leq |(n,m,c)| d(c) N(c)^{1/2},
\end{equation}
which we will use repeatedly.
Here $d(c)$ is the number of divisors of $c$.

\begin{theorem}[Kuznetsov formula
\cite{motohashi_trace_1996,motohashi_trace_1997}]\label{Kuznetsovformula}
Let $h$ be an even function,
holomorphic in $|\Im r|<1/2+\epsilon$, for some $\epsilon>0$,
and assume that $h(r)=O((1+|r|)^{-3-\epsilon})$ in the strip.
Then, for any non-zero $m,n \in {\mathbb Z}[i]$:
\[
D + C = U + S,
\]
with
\[
\begin{split}
D &= \sum_{j=1}^{\infty}   \frac{ r_j \rho_j(n) \overline{\rho_j(m)} }{\sinh \pi r_j}  h(r_j),  \\
C &=  2 \pi \int_{-\infty}^{\infty}     \frac{\sigma_{ir}(n)
      \sigma_{ir}(m)}{ |mn|^{ir} |\zeta_{K}(1+ir)|^2}dr, \\
U &= \frac{\delta_{m,n} + \delta_{m,-n}}{\pi^2} \int_{-\infty}^{\infty} r^2 h(r) dr, \\
S &= \sum_{c \in\zin} \frac{S_{\rationals(i)}(n,m,c)}{|c|^2}
\int_{-\infty}^{\infty}    \frac{ir^2}{\sinh \pi r} h(r) H_{ir} \left(\frac{2 \pi \sqrt{\overline{mn}}}{c}\right) dr,
\end{split}
\]
where $\sigma_s(n) = \sum_{d|n} N(d)^s$ is the divisor function,
\[
H_{\nu} (z) = 2^{-2\nu} |z|^{2 \nu} J_{\nu}^*(z) J_{\nu}^{*}(\overline{z}),
\]
$J_{\nu}$ is the $J$-Bessel function of order $\nu$,
and $ J_{\nu}^*(z) = J_{\nu}(z) (z/2)^{-\nu}$.
\end{theorem}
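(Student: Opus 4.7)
The plan is to derive the formula via the Poincar\'e-series method, as developed for the Picard manifold in~\cite{motohashi_trace_1996,motohashi_trace_1997}. For $m\in\Z[i]\setminus\{0\}$ and a weight $f$ of sufficient decay, I would introduce the Poincar\'e series
\[
P_{m,f}(P) = \sum_{\gamma\in\Gamma_{\infty}\backslash\Gamma} f(y(\gamma P))\, e(\langle m, x(\gamma P)\rangle),
\]
where $P = x+yj \in \uphs$ with $x\in\C$, $y>0$, and $\Gamma_{\infty}$ is the stabilizer of the cusp at $\infty$. The strategy is to compute the $n$-th Fourier coefficient of $P_{m,f}$ in two different ways and equate them. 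Concretely, one first derives the identity with the one-parameter family $f(y) = y^{1+s}$ for $\Re s$ large, and then passes to an arbitrary $h$ by inverting a Bessel-type integral transform in $s$.

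For the geometric side, I would use the Bruhat decomposition of $\Gamma\setminus\Gamma_{\infty}$, whose non-identity cosets are parametrized by the lower-right entry $c\in\zin$. The identity coset, together with the $\Gamma_{\infty}$-symmetry and the unit $-1\in\Z[i]$, produces the diagonal term $U$ (including the $\delta_{m,-n}$). The remaining cosets contribute, after summing over the upper-right entry $a$ modulo $(c)$, the Kloosterman sum $S_{\Q(i)}(n,m,c)$ times a Fourier integral over $\C$. Evaluation of this integral yields the kernel $H_{\nu}(2\pi\sqrt{\overline{mn}}/c)$; the factorization as $J_{\nu}^{*}(z)J_{\nu}^{*}(\overline{z})$ reflects the two-dimensional Fourier theory on $\C\cong\R^{2}$ underlying the upper half-space model and is precisely what distinguishes the three-dimensional formula from its two-dimensional analogue.

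For the spectral side, I would expand $P_{m,f}$ into Maass cusp forms $\phi_{j}$ and Eisenstein series and compute each inner product by unfolding. The cuspidal contribution is a Mellin transform of the $K$-Bessel Whittaker function of $\phi_{j}$, producing $r_{j}\rho_{j}(n)\overline{\rho_{j}(m)}/\sinh\pi r_{j}$ multiplied by a specific weight $h_{s}(r_{j})$ expressible via Gamma-function identities. The Eisenstein contribution, whose Fourier coefficients involve $\sigma_{ir}(n)\sigma_{ir}(m)|mn|^{-ir}/|\zeta_{K}(1+ir)|^{2}$, yields $C$ against the same weight $h_{s}$. Integrating the resulting identity against a suitable kernel in $s$ inverts the map
\[
h(r)\;\longmapsto\;\int_{-\infty}^{\infty} h(r)\,H_{ir}(z)\,\frac{r\,dr}{\sinh\pi r},
\]
and extends the formula to the general class of even, holomorphic test functions $h$ in the statement.

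The main technical obstacle is controlling convergence on the geometric side. The Weil bound~\eqref{weil} gives $|S_{\Q(i)}(n,m,c)|\ll_{\eps}|c|^{1+\eps}$, so enough decay must be extracted from the Bessel integral to make the outer sum over $c$ absolutely convergent. This requires careful uniform asymptotic analysis of $H_{ir}(z)$ both as $|r|\to\infty$ (where uniform large-order estimates for $J_{ir}$ are needed) and as $|z|\to 0$ (where each factor $J_{ir}^{*}(z)$ has only mild growth in $|z|$). The hypotheses that $h$ be holomorphic in $|\Im r|<1/2+\eps$ and satisfy $h(r)\ll(1+|r|)^{-3-\eps}$ are precisely what is needed to justify all the contour shifts and to make the inversion of the transform rigorous.
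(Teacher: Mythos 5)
The paper does not prove this theorem; it is imported verbatim from Motohashi \cite{motohashi_trace_1996,motohashi_trace_1997} and used as a black box, so there is no internal argument to compare you against. Your outline does follow the strategy of the cited source: Poincar\'e series attached to the cusp at infinity, Bruhat decomposition on the geometric side producing the diagonal term and the Gaussian Kloosterman sums, unfolding against the spectral expansion on the other side, and an inversion of a Bessel-type transform to pass from the special weights $f(y)=y^{1+s}$ to a general test function $h$. That is the right architecture.

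As a proof, however, what you have written is a route map, and the steps that carry essentially all of the difficulty are asserted rather than carried out. First, the evaluation of the two-dimensional Fourier integral over $\C$ that produces $H_{ir}(2\pi\sqrt{\overline{mn}}/c)$ is a genuine computation with Bessel functions of complex order; the factorization $J_{ir}^{*}(z)J_{ir}^{*}(\overline{z})$ has to be extracted, not merely attributed to ``two-dimensional Fourier theory''. Second, and more seriously, the inversion of the transform $h\mapsto\int h(r)H_{ir}(z)\,ir^{2}\,dr/\sinh\pi r$ is the crux of Motohashi's papers: you specify neither the inverting kernel nor why the class of test functions in the statement (even, holomorphic in $|\Im r|<1/2+\eps$, decaying like $(1+|r|)^{-3-\eps}$) is reached by the inversion; these hypotheses are exactly where the contour shifts live, and without them the formula is not established for the $h$ of \eqref{def:h} that the paper actually uses. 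Third, the Poincar\'e series with $f(y)=y^{1+s}$ is not square-integrable and the sum over $c$ is only conditionally controlled near the edge of absolute convergence, so an analytic continuation or regularization step is needed before the two computations of the Fourier coefficient may be equated; as you yourself note, the Weil bound \eqref{weil} alone does not close this, and the ``careful uniform asymptotic analysis of $H_{ir}$'' is precisely the missing content. A minor slip: the term $\delta_{m,-n}$ in $U$ does not come from the unit $-1$ (which is trivial in $\pslzi$) but from the non-translation part of $\Gamma_{\infty}$, namely the image of $\mathrm{diag}(i,i^{-1})$ acting by $x\mapsto -x$. None of these objections suggests the approach would fail --- it is the approach of the cited papers --- but as written the proposal defers the proof rather than supplying it.
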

For the definition of the $\rho_{j}$, see the explanation
after~\eqref{0612:eq001}.
We will also need the power series expansion~\cite[8.402]{gradshteyn2007}
for the $J$-Bessel function:
\begin{equation}\label{eq:jseries}
J_{ir}(z)(z/2)^{-ir} = \sum_{k=0}^\infty \frac{(-1)^k}{k!\Gamma(k+1+ir)}\left(\frac{z}{2}\right)^{2k}.
\end{equation}

\section{Outline of proof of Theorem~\ref{intro:thm2}}\label{S2}

In this section we outline the proof of Theorem~\ref{intro:thm2}.
The result for the sharp sum $S(T,X)$ can be deduced from the corresponding result
for the smooth sum
\begin{equation}\label{def:smoothSTX}
\sum_{r_j} X^{ir_j}e^{-r_j/T}.
\end{equation}
Indeed, if we assume the inequality
\[
\int_V^{V+Y} \Big|\sum_{r_j} X^{ir_j}e^{-r_j/T}\Big|^2 dX \ll T^3 V^{3/2+\eps},
\]
then using a standard Fourier analysis method (see~\cite{iwaniec_prime_1984,luo_quantum_1995})
and the Cauchy--Schwarz inequality we can estimate, for $T<V^{1/2}$ and $Y\leq V$,
\[
\begin{split}
\int_V^{V+Y} \!\!\!
&
|S(T,X)|^2 dX
\\
&\ll
T^\eps \!\! \int_{|\xi|\leq 1} \int_V^{V+Y} \Big|\sum_{r_j} (Xe^{-2\pi\xi})^{ir_j}e^{-r_j/T} \Big|^2 dX \min(T,|\xi|^{-1}) d\xi
+T^{4+\eps}Y
\\
&\ll
T^{3+\eps}V^{3/2+\eps}.
\end{split}
\]
To study the sum in~\eqref{def:smoothSTX}, we approximate $X^{ir_j}e^{-r_j/T}$ by a more regular function
that we borrow from~\cite{deshouillers_iwaniec_1982}, namely
\begin{equation}\label{def:h}
h(r) = \frac{\sinh((\pi + 2i\alpha)r)}{\sinh \pi r},\quad 2\alpha = \log X + i/T,
\end{equation}
which satisfies $h(r) = X^{ir}e^{-r/T} + O(e^{-\pi r})$
\cite{motohashi_trace_1996,motohashi_trace_1997}.
Before applying the Kuznetsov formula, we need to insert the Fourier
coefficients into our spectral sum.
We do this by means of an extra average
and by using the fact that the (normalized) Rankin--Selberg $L$-function has a simple pole at $s=1$
with the residue being an absolute constant.

Consider a smooth function $f$, compactly supported on
$[\sqrt{N},\sqrt{2N}]$,
satisfying $|f^{(p)}(\xi)|\ll N^{-p/2}$ for every $p\geq 0$ and with mass
$\int_0^\infty f(x)dx = N$.
Let $\tilde{f}$ be the Mellin transform of $f$. Then
\begin{equation}\label{0612:eq001}
    \frac{1}{N} \sum_{n,r_j} f(\abs{n}) h(r_j) |v_j(n)|^2
=
\frac{1}{\pi iN} \sum_{r_j} h(r_j) \int_{(3)} \tilde{f}(2s) L(s,u_j\otimes u_j) ds,
\end{equation}
where $L(s,u_j\otimes u_j)$ is the Rankin--Selberg $L$-function
\[
    L(s,u_j\otimes u_j) = \sum_{n\in\zin} \frac{|v_j(n)|^2}{N(n)^s},
\]
where $v_j(n)$ are Fourier coefficients of cusp forms, normalized by the relation
$v_j(n)\sqrt{\sinh(\pi r_j)}=\rho_j(n)\sqrt{r_j}$.
We apply the Kuznetsov formula on the left-hand side in~\eqref{0612:eq001},
while on the right-hand side we move the line of integration to $\Re(s)=1/2$,
picking up the residue at $s=1$. We obtain, for absolute constants $c_1,c_2$,
\begin{equation}\label{eq003}
\begin{split}
    \sum_{r_j} X^{ir_j} e^{-r_j/T}
    &=
    \frac{c_1}{N} \!\! \sum_{n\in\zin} \!\!\!\! f(|n|)
    \mathcal{S}_n(\omega)
    \\
    &+
    \frac{c_2}{N}\int_{(1/2)}
    \tilde{f}(2s) M_1(s) ds
    +
    O(T^2).
\end{split}
\end{equation}
The quantities appearing in~\eqref{eq003} are described as follows.
The term $\mathcal{S}_n(\omega)$ is a weighted sum of Gaussian Kloosterman
sums,
\begin{equation}\label{def:Snomega}
\mathcal{S}_n(\omega)
=
\sum_{c\in\mathbb{Z}[i]\backslash\{0\}} \frac{S_{\mathbb{Q}(i)}(n,n;c)}{N(c)}\omega\left(\frac{2\pi\bar{n}}{c}\right),
\end{equation}
where $\omega$ is the integral transform of $h$ that appears in
Kuznetsov's formula, that is,
\begin{equation}\label{def:omega}
\omega(z) = \int_{-\infty}^{+\infty}\frac{ir^2}{\sinh(\pi
r)}H_{ir}(z)h(r)dr.
\end{equation}
The kernel $H_{ir}(z)$ is given by
\[
H_{\nu} (z) = 2^{-2\nu} |z|^{2 \nu} J_{\nu}^*(z) J_{\nu}^{*}(\overline{z}),
\]
with $J_{\nu}^*(z) = J_{\nu}(z) (z/2)^{-\nu}$, where
$J_{\nu}$ denotes the Bessel function of the first kind.

The term $M_1(s)$ in~\eqref{eq003} is a weighted first moment of Rankin--Selberg
$L$-functions:
\[
M_1(s) = \sum_{r_j} h(r_j) L(s,u_j\otimes u_j).
\]
Note that the integral on the half line in~\eqref{eq003} is absolutely convergent since
$\tilde{f}(2s)\ll N^{1/2} |s|^{-M}$, for arbitrarily large $M$ (when
$\Re(s)=1/2$),
and $L(s,u_j\otimes u_j)$ is polynomially bounded in $|s|$.
Finally, the term $O(T^2)$ in~\eqref{eq003} comes from the identity element and the continuous
spectrum in the Kuznetsov formula.

In sections~\ref{S3} and~\ref{S4} we will prove the following two estimates
that we state as separate propositions.
In order to simplify the exposition,
we assume that $N$ is bounded polynomially
in $X$ and $T$, i.e.
\begin{equation}\label{NTX}
N \ll (TX)^A,
\end{equation}
for some arbitrary $A>0$.
Our final choice of $N$ satisfies this condition and thus~\eqref{NTX} is
not restrictive.
\begin{proposition}\label{proposition-kloosterman-sums}
Let $V\geq Y\gg 1$, and $V^\eps \ll T\ll V^{1/2-\eps}$.
Let $N\geq 1$ be chosen so that \eqref{NTX} holds,
and suppose $n\in\Z[i]$ with $N(n)\asymp N$.
Then
\begin{equation}\label{prop:eq001}
\int_V^{V+Y} |\mathcal{S}_n(\omega)|^2 dX
\ll
(NV^2+T^3Y) (NV)^\eps.
\end{equation}
\end{proposition}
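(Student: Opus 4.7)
The strategy is to open the square, perform the $X$-integration first (to extract oscillation in $X$), and then analyse the resulting double sum over $c_1,c_2$. Writing
\[
\int_V^{V+Y}\abs{\mathcal{S}_n(\omega)}^2\,dX
=\sum_{c_1,c_2\in\zin}\frac{S_{\Q(i)}(n,n;c_1)\overline{S_{\Q(i)}(n,n;c_2)}}{N(c_1)N(c_2)}\,\mathcal{I}(c_1,c_2),
\]
where $\mathcal{I}(c_1,c_2)=\int_V^{V+Y}\omega(2\pi\bar n/c_1)\overline{\omega(2\pi\bar n/c_2)}\,dX$, I would substitute the definition~\eqref{def:omega} of $\omega$. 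Using the approximation $h(r)=X^{ir}e^{-r/T}+O(e^{-\pi r})$ from \cite{motohashi_trace_1996}, the $X$-dependence of $\omega$ is reduced to the factor $X^{ir}$, and the innermost $X$-integral becomes $\int_V^{V+Y}X^{i(r_1-r_2)}\,dX\ll\min\{Y,V\abs{r_1-r_2}^{-1}\}$.

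Next I would analyse the Bessel kernel via the series expansion~\eqref{eq:jseries}, which gives
\[
H_{ir}(2\pi\bar n/c)
=\Bigl(\frac{\pi^2 N(n)}{N(c)}\Bigr)^{\!ir}\sum_{k,l\geq0}\frac{(-1)^{k+l}\pi^{2(k+l)}\bar n^{2k}n^{2l}}{k!\,l!\,c^{2k}\bar c^{2l}\,\Gamma(k+1+ir)\Gamma(l+1+ir)}.
\]
The phase $(N(n)/N(c))^{ir}$ combines with $X^{ir}$ into the effective oscillation $(N(n)X/N(c))^{ir}$, and because $\abs{\Gamma(1+ir)}^2=\pi r/\sinh\pi r$ the $k=l=0$ term is dominant: the corresponding $r$-integrand has $L^1$ size of order $T^2$ and $L^2$ size of order $T^{3/2}$. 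The higher-order $(k,l)$ contribute a tail controlled by a geometric series in $N(n)/N(c)$ which is harmless in the main range $\abs{c}\gg\sqrt{N}/T$; the short range $\abs{c}\ll\sqrt{N}/T$ contains only $O(N/T^2)$ Gaussian integers and is handled separately by \eqref{weil} together with a trivial bound on $\omega$, producing a contribution compatible with $NV^2$.

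The estimate then splits into two pieces. The diagonal regime ($c_1=c_2$ together with $\abs{r_1-r_2}\ll V/Y$) is dominated, via Parseval on the $r$-integral, by the $L^2$-norm of the $r$-integrand; combined with the Weil bound \eqref{weil} to control $\sum_c\abs{S_{\Q(i)}(n,n;c)}^2 N(c)^{-2}\ll (NV)^\eps$, this produces the $T^3Y$ contribution. The off-diagonal regime (either $c_1\neq c_2$ or $\abs{r_1-r_2}\gg V/Y$) uses the damping factor $V\abs{r_1-r_2}^{-1}$ together with Cauchy--Schwarz over $c_1,c_2$ and \eqref{weil}; summing $\abs{S_{\Q(i)}(n,n;c)}N(c)^{-1}$ up to $\abs{c}\asymp\sqrt{N}$ produces a contribution bounded by $NV^2$ up to an $(NV)^\eps$ factor.

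The main obstacle is the off-diagonal analysis: Cauchy--Schwarz applied to the Kloosterman sums destroys their sign cancellation, so one must verify that the $V\abs{r_1-r_2}^{-1}$ damping is strong enough to integrate in the spectral variables and that the two regimes of $\abs{c}$ combine coherently into $NV^2$. A related subtle point is matching the short-$\abs{c}$ contribution, where the power series~\eqref{eq:jseries} converges slowly, to the bound $NV^2$; this is precisely where the exact series representation avoids Koyama's stationary-phase analysis \cite{koyama_prime_2001} and allows a termwise treatment of the Bessel integral.
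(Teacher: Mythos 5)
Your opening move---open the square, integrate in $X$ first to get $\min\{Y,V\abs{r_1-r_2}^{-1}\}$, then expand the Bessel kernel by \eqref{eq:jseries}---diverges from the paper at the decisive point and, as written, does not close. The first problem is the range of validity of the series treatment. The tail of \eqref{eq:jseries} beyond $k=l=0$ is controlled by terms of size roughly $e^{k+l}\abs{z}^{2(k+l)}/(k!\,l!\,4^{k+l})$ with $z=2\pi\bar n/c$; this is summable and small only for $\abs{z}<1$, i.e.\ $N(c)>4\pi^2N(n)$, and is exponentially \emph{large} in $\abs{z}^2$ for $\abs{z}\geq 1$. Your ``main range'' $\abs{c}\gg\sqrt N/T$ includes all of $1\leq\abs{z}\ll T$, where the termwise treatment fails outright; the paper handles $\abs{z}\geq1$ by an entirely separate argument (an integral representation of $H_{ir}$ through $K_{2ir}$ and an exact evaluation of the $r$-integral). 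The second, more fundamental problem is at the other end: the sum over $c$ is infinite, and $\sum_c\abs{S_{\Q(i)}(n,n;c)}N(c)^{-1}$ restricted to $N(c)\leq C$ grows like $C^{1/2+\eps}$ by \eqref{weil}. Your truncation at $\abs{c}\asymp\sqrt N$ is therefore not a truncation at all; convergence requires a decay $\omega(z)\ll\abs{z}^{\delta}$ as $\abs{z}\to0$, which is invisible in \eqref{def:omega} and cannot be extracted after you have taken absolute values (or Cauchy--Schwarz) in the spectral variables, because the $k=l=0$ integrand $r^2h(r)/\Gamma(1+ir)^2\sinh(\pi r)$ has no pointwise decay in $\abs{z}$. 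That decay comes precisely from the oscillation of $\abs{z/2}^{2ir}\Gamma(1+ir)^{-2}$ in $r$, which your scheme discards.

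The missing idea is the exact evaluation of the $r$-integral: writing it as a Mellin--Barnes integral $\int_{(1)}\Gamma(s)^2A^{-2s}ds$ gives $\omega_0(z)=\tfrac{i}{\pi}M^2\abs{z}^2X\,K_0(M\abs{z}X^{1/2})+O(\abs{z}^{3/2}X^{-1/4})$ with $M=e^{-i(\pi/2-1/2T)}$ (Lemma~\ref{lemma:MellinK0}). This single formula supplies all three ingredients you are missing: the bound $\abs{K_0(w)}\ll\abs{w}^{-1/2}$ yields the $\abs{z}^{3/2}$ decay that truncates the $c$-sum at $N(c)\ll N(n)V$; the factor $e^{-\Re(Mw)}$ with $\Re(M)\asymp T^{-1}$ kills everything with $N(c)\ll N(n)V/T^2$, and it is exactly this restriction, combined with the correlation bound $YV^{-1/2}(x_1x_2)^{-1/2}$, that produces the $T^3Y$ diagonal term (your Parseval argument gives at best $VT^3$ per value of $c$ and, with no lower cutoff on $N(c)$, cannot reach $T^3Y$); and the phase $e^{-i\Im(M)\abs{z}X^{1/2}}$ provides the off-diagonal saving $L^{-1}=\abs{x_1-x_2}^{-1}$, which through the Hardy--Littlewood--P\'olya inequality gives $N^{1+\eps}V^{2+\eps}$. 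The oscillation $X^{i(r_1-r_2)}$ alone is too weak: after Cauchy--Schwarz over $c_1,c_2$ your off-diagonal estimate is of order $N^{1+\eps}VT^3$, which exceeds $NV^2$ once $T^3>V$, i.e.\ throughout the range $V^{1/3}<T\leq V^{1/2-\eps}$ that the proposition must cover. Your instinct that the exact series representation replaces Koyama's stationary phase is correct in spirit, but the object it must be converted into is the closed $K_0$-formula, not a termwise bound.
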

In our proof, the first term in~\eqref{prop:eq001} will be the dominant
one.
Since this term does not depend on $Y$,
the most interesting result is obtained on the full dyadic interval $[V,2V]$.
The same observation was made in Remark~\ref{intro:rmk1} and it applies to the next proposition as well.
\begin{proposition}\label{proposition-average-rankin-selberg}
Let $V\geq Y\gg 1$, and $T\gg 1$. Then
\[
\int_V^{V+Y} |M_1(s)|^2 dX \ll |s|^A T^{6+\eps} V,
\]
for some absolute constant $A$.
\end{proposition}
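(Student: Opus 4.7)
The plan is to open the square
\[
|M_1(s)|^2=\sum_{j,k}h(r_j)\overline{h(r_k)}L_j\overline{L_k},\qquad L_j:=L(s,u_j\otimes u_j),
\]
and swap the $X$-integral with the double sum. Expanding the hyperbolic sines in~\eqref{def:h} one finds $h(r)=X^{ir}e^{-r/T}+O(e^{-\pi r})$ for real $r$, so the $X$-dependence of $h(r_j)\overline{h(r_k)}$ sits entirely in the oscillating factor $X^{i(r_j-r_k)}$, leaving an $X$-independent envelope of size $e^{-(r_j+r_k)/T}$. Consequently $\int_V^{V+Y}X^{i(r_j-r_k)}\,dX$ equals $Y$ on the diagonal $j=k$ and is $\ll\min(Y,V/|r_j-r_k|)$ off diagonal.

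The diagonal contribution is then $Y\sum_j e^{-2r_j/T}|L_j|^2$. For the off-diagonal, I would apply the symmetric inequality $|L_jL_k|\leq(|L_j|^2+|L_k|^2)/2$; combined with the three-dimensional Weyl law $\#\{r_j\leq R\}\asymp R^3$ for the Picard manifold (from which one deduces the auxiliary estimate $\sum_{k\neq j}e^{-r_k/T}/(1+|r_j-r_k|)\ll T^{2+\eps}$), this bounds the off-diagonal by $\ll VT^{2+\eps}\sum_j e^{-r_j/T}|L_j|^2$. Both pieces are therefore controlled by the spectral second moment
\begin{equation}\label{plan:meanL2}
\Sigma(T,s):=\sum_{r_j}e^{-r_j/T}\bigl|L(1/2+it,u_j\otimes u_j)\bigr|^2,
\end{equation}
and the target bound $|s|^A T^{6+\eps}V$ reduces to showing $\Sigma(T,s)\ll|s|^A T^{4+\eps}$.

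The main obstacle is thus the mean-value bound~\eqref{plan:meanL2}. My plan for it is to insert an approximate functional equation, writing each $L(1/2+it,u_j\otimes u_j)$ as a smoothly truncated Dirichlet sum of length $\asymp(r_j|s|)^{2+\eps}$ in the Rankin--Selberg coefficients, which are essentially the diagonal Fourier coefficients $|v_j(n)|^2$. After opening the square, the resulting spectral sums are tackled via the Kuznetsov formula of Theorem~\ref{Kuznetsovformula}, with Weil's bound~\eqref{weil} controlling the Gaussian Kloosterman contribution and Weyl's law handling the identity term. The polynomial factor $|s|^A$ arises from the archimedean gamma factors of the functional equation; this growth in $|s|$ is harmless because $M_1(s)$ appears in~\eqref{eq003} paired with $\tilde f(2s)$, which decays faster than any polynomial on the line $\Re(s)=1/2$.
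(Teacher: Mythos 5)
Your reduction of the proposition to a spectral mean--value estimate is correct and essentially matches the paper's argument: the paper also discards the $O(e^{-\pi r})$ error in $h$, opens the square, uses $\int_V^{V+Y}X^{i(r_j-r_k)}dX\ll\min(Y,V/|r_j-r_k|)$, and controls the spacing sum $\sum_k(1+|r_j-r_k|)^{-1}\ll T^{2+\eps}$ via the Weyl law in unit intervals (the paper organizes this through blocks $(m-1)T<r_j\le mT$ with weights $m^2$ and $e^{-2m}$ rather than keeping $e^{-r_j/T}$ throughout, but that is cosmetic). Your bookkeeping $Y\cdot T^{4+\eps}+VT^{2+\eps}\cdot T^{4+\eps}\ll VT^{6+\eps}$ is also the paper's.

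The genuine gap is in the key input $\Sigma(T,s)=\sum_j e^{-r_j/T}|L_j|^2\ll|s|^AT^{4+\eps}$, which you leave as a plan rather than a proof. The paper does not prove this from scratch: it factors $L(s,u_j\otimes u_j)=|v_j(1)|^2\,\zeta_{\Q(i)}(s)\zeta_{\Q(i)}(2s)^{-1}L(s,\mathrm{sym}^2u_j)$, invokes $v_j(1)\ll r_j^\eps$ from Koyama, and imports the second moment bound $\sum_{r_j\le T}|L(s,\mathrm{sym}^2u_j)|^2\ll|s|^AT^{4+\eps}$ from Theorem~3.3 of the companion paper \cite{balkanova_prime_2017} --- a nontrivial theorem in its own right. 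Your proposed route (approximate functional equation directly for $L(1/2+it,u_j\otimes u_j)$ with coefficients ``essentially $|v_j(n)|^2$'', then Kuznetsov) does not work as stated: after opening the square you face $\sum_j h(r_j)|v_j(n)|^2|v_j(m)|^2$, a \emph{quartic} expression in the Fourier coefficients, whereas the Kuznetsov formula of Theorem~\ref{Kuznetsovformula} only linearizes bilinear forms $\sum_j h(r_j)\rho_j(n)\overline{\rho_j(m)}$. One must first use Hecke multiplicativity to pass to the symmetric square (coefficients linear in $v_j$), and even then obtaining $T^{4+\eps}$ --- which amounts to square-root cancellation over the $\asymp T^3$ forms --- requires a large-sieve type analysis, not just Weil's bound termwise. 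Since this is the hardest ingredient of the proposition, you should either carry out that argument in detail or, as the paper does, cite the existing second moment estimate.
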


Let us show that Theorem~\ref{intro:thm2} follows from the above two propositions.
By using the Cauchy--Schwarz inequality and integrating in $X$
in~\eqref{eq003}, we get
\[
\begin{split}
\int_V^{V+Y} \Big|\sum_{r_j} X^{ir_j}e^{-r_j/T}\Big|^2 dX
&\ll
\frac{1}{N} \sum_{N(n)\sim N} \int_V^{V+Y} |\mathcal{S}_n(\omega)|^2 dX
\\
&+
\frac{1}{N} \int_{(1/2)} |s|^{-M} \int_V^{V+Y} |M_1(s)|^2 dX ds
+
O(T^4).
\end{split}
\]
Applying Proposition~\ref{proposition-kloosterman-sums} and
Proposition~\ref{proposition-average-rankin-selberg} yields
\[
\int_V^{V+Y} \Big|\sum_{r_j} X^{ir_j}e^{-r_j/T}\Big|^2 dX
\ll
N^{1+\eps} V^{2+\eps} + T^3 Y (NV)^\eps + \frac{T^{6+\eps}V}{N} + T^4.
\]
We pick $N=T^3V^{-1/2}$
and thus arrive at the inequality
\begin{equation}\label{0512:eq001}
\int_V^{V+Y} \Big|\sum_{r_j} X^{ir_j}e^{-r_j/T}\Big|^2 dX \ll T^{3}
V^{3/2+\eps},
\end{equation}
since $T\leq V^{1/2-\eps}$.
Note that $N\geq 1$ only if $T\geq V^{1/6}$.
For $T<V^{1/6}$, the bound~\eqref{0512:eq001}
follows from the trivial estimate $S(T,X)\ll T^3$. This proves~\eqref{0512:eq001} for every value
of $T\leq V^{1/2-\eps}$, which concludes the proof of
    Theorem~\ref{intro:thm2}.
It remains to prove Propositions~\ref{proposition-kloosterman-sums}
and~\ref{proposition-average-rankin-selberg}.

\section{Second moment of sums of Kloosterman sums}\label{S3}

Next we want to prove Proposition~\ref{proposition-kloosterman-sums}.
In order to do this, we will need to simplify expressions involving
$\omega(z)$ according to the size of $\abs{z}$. We will first prove a
number of  auxiliary lemmas, which are then used in different ranges of
the summation in $\mathcal{S}_{n}(\omega)$.
Throughout this section we shall assume that
$N\geq 1$, $n\in\Z[i]$ satisfies $N(n)\asymp N$,
and that $T$, $X$, $V$ and $Y$ are real numbers satisfying the inequalities
\begin{equation}\label{0412:eq001}
1\ll Y\leq V\leq X\leq V+Y
\quad\text{and}\quad
V^\eps \ll T\leq V^{1/2-\eps}.
\end{equation}
Moreover, we recall the mild assumption~\eqref{NTX} on $N$.

We begin the proof by simplifying the
expression defining $\mathcal{S}_n(\omega)$.
After removing the initial
part of the sum,
we can replace the weight function $\omega$ by
a simpler function $\omega_0$ given by
\begin{equation}\label{def:omega0}
\omega_0(z) = \int_{-\infty}^{\infty} \frac{1}{\Gamma(1+ir)^2} \left|\frac{z}{2}\right|^{2ir} \frac{ir^2h(r)}{\sinh(\pi r)} dr.
\end{equation}
These two simplifications come at the cost of an admissible error term,
as demonstrated in the following lemma.

\begin{lemma}\label{lemma:omega0}
Let $\mathcal{S}_n(\omega)$ be as in~\eqref{def:Snomega},
and let $\omega_0$ be as in~\eqref{def:omega0}. Then
\begin{equation}\label{2211:lemma:eq}
\begin{split}
\mathcal{S}_n(\omega)
=
\mathcal{S}_n^\dagger(\omega_0)
+
O( N^{1/2+\epsilon}T^{1+\eps} ),
\end{split}
\end{equation}
where
\[
\mathcal{S}_n^\dagger(\omega_0)
=
\sum_{N(c)>4\pi^2 N(n)}
\frac{S_{\Q(i)}(n,n,c)}{N(c)} \omega_0\left(\frac{2\pi\bar{n}}{c}\right).
\]
\end{lemma}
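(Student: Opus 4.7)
I would split $\mathcal{S}_n(\omega)$ at $N(c) = 4\pi^2 N(n)$, which is the threshold $|z|=1$ with $z = 2\pi\bar n/c$. For $N(c) > 4\pi^2 N(n)$ the power series~\eqref{eq:jseries} for the Bessel factors provides an efficient approximation, and its truncation to the constant term produces exactly $\omega_0$. The complementary range $N(c)\leq 4\pi^2 N(n)$ is discarded via a direct estimate of $\omega$.

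For the tail $N(c) > 4\pi^2 N(n)$, use~\eqref{eq:jseries} to expand $J_{ir}^*(z)J_{ir}^*(\bar z)$ as a double series in $(z/2)^{2k}(\bar z/2)^{2l}$. The $(k,l)=(0,0)$ contribution equals $1/\Gamma(1+ir)^2$ and, multiplied by the outer factor $2^{-2ir}|z|^{2ir}=|z/2|^{2ir}$, reproduces the integrand of $\omega_0$. For the remainder, the standard inequality $|\Gamma(k+1+ir)|\gtrsim (1+|r|)^k|\Gamma(1+ir)|$ together with the identity $\sinh(\pi r)|\Gamma(1+ir)|^2 = \pi r$ and geometric summation yield
\[
|\omega(z)-\omega_0(z)|\ll |z|^2 \int_{\R}|h(r)|\,dr \ll |z|^2 T.
\]
Substituting $|z|^2 \asymp N/N(c)$ and applying the Weil bound~\eqref{weil} followed by dyadic summation, the tail contributes
\[
NT\sum_{N(c)>4\pi^2 N(n)}\frac{d(c)}{N(c)^{3/2}}\ll N^{1/2+\eps}T,
\]
which fits inside the claimed error.

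For the head $N(c)\leq 4\pi^2 N(n)$, I would aim to show $|\omega(z)|\ll T^{1+\eps}$ uniformly for $|z|\geq 1$ in this range. One natural route is to shift the contour in~\eqref{def:omega} to $\Im r = -\eta$ for some small $\eta>0$, which is permissible since $h$ is holomorphic in $|\Im r|<1/2+\eps$ by Theorem~\ref{Kuznetsovformula}. On the shifted line, the factor $|z/2|^{2ir}$ from the leading expansion contributes $|z/2|^{-2\eta}$ and may be combined with uniform bounds on $J_{ir}^*(z)$ to recover the factor of $T$ absent from the naive estimate $|\omega(z)|\ll T^2$. Pairing with~\eqref{weil} and summing yields
\[
T^{1+\eps}\sum_{N(c)\leq 4\pi^2 N(n)}\frac{d(c)}{N(c)^{1/2}}\ll N^{1/2+\eps}T^{1+\eps}.
\]
The main obstacle is precisely this $T^{1+\eps}$ bound: it requires uniform control of $J_{ir}^*(z)$ across the full range $1\leq |z| \lesssim N^{1/2}$, with genuine difficulty in the transitional window $|z|\sim T$ where Debye-type asymptotics (or integration by parts against the oscillating phase $(X|z|^2/4)^{ir}$ in $h$) may be needed.
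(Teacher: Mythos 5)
Your decomposition at $N(c)=4\pi^2N(n)$ and your treatment of the tail $|z|<1$ coincide with the paper's: expand via \eqref{eq:jseries}, isolate the $(0,0)$ term as $\omega_0$, bound the remainder by $O(T|z|^2)$ (your route through the identity $|\Gamma(1+ir)|^2\sinh(\pi r)=\pi r$ is if anything cleaner than the paper's Stirling estimate \eqref{eq:gammaest}), and sum against Weil's bound \eqref{weil} to get $O(N^{1/2+\eps}T)$. That half is fine.

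The gap is in the head $N(c)\le 4\pi^2N(n)$, and you have correctly located it yourself: everything hinges on a bound of the shape $\omega(z)\ll T^{1+\eps}$ for $|z|\ge 1$, and your proposed route (shift to $\Im r=-\eta$ and invoke ``uniform bounds on $J^*_{ir}(z)$'') is not carried out. The difficulty is not cosmetic: after the shift one still faces $J_{ir}(z)$ with $|r|$ up to roughly $T\log T$ (where $h$ is not yet negligible) and $|z|$ anywhere in $[1,\,CN^{1/2}]$, and in the transition range $|z|\asymp|r|$ the uniform asymptotics of $J_{ir}$ are exactly the delicate point; it is not clear the claimed factor of $T$ is saved there without substantial additional work. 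The paper avoids this entirely: it inserts Motohashi's integral representation of $H_{ir}(z)$ in terms of $K_{2ir}(2|z|\cos\phi)$ \cite[Eq.~(2.10)]{motohashi_trace_1997}, replaces $h(r)$ by $\cosh((\pi+2i\alpha)r)/\cosh(\pi r)$ up to an admissible error, and evaluates the $r$-integral \emph{in closed form} via \cite[6.795.1]{gradshteyn2007} as $-\tfrac{\pi}{8}\,\partial_b^2\exp(-a\cosh b)$ with $a=2|z|\cos\phi$ and $b=\alpha-i\pi/2$; the remaining elementary $\phi$-integral is then estimated by splitting according to the size of $\cos\phi$, giving $\omega(z)\ll |z|^{-1/2}+|z|^2XT^{-p}+T^{2+\eps}|z|^{-1}X^{-1/2}$. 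Summed against Weil's bound this is $O(N^{1/2+\eps}T^{1+\eps})$ precisely because $V^\eps\ll T\le V^{1/2-\eps}$ and $N\ll(TX)^A$ (the only place \eqref{NTX} is used). To complete your outline you must either actually prove the uniform $T^{1+\eps}$ bound for $\omega$ on $|z|\ge1$ --- which is the real content of this half of the lemma --- or replace that step with an exact-evaluation argument of the above type.
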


\begin{proof}
Let us focus first on the portion of the sum where $N(c)\leq 4\pi^2N(n)$,
i.e.~when the complex number
$z=2\pi\bar{n}/c$ satisfies $|z|\geq 1$.
We start from the definition of $\omega(z)$, see~\eqref{def:omega}, and
apply an integral representation
for the kernel $H_{ir}(z)$ (see~\cite[Equation (2.10)]{motohashi_trace_1997}).
Writing $z=|z|e^{i\theta}$, we have
\[
\omega(z)
=
\frac{8}{\pi^2}
\int_0^{\pi/2} \cos(2|z|\cos\theta\sin\phi)
\int_{-\infty}^{+\infty} r^2 h(r) \cosh(\pi r) K_{2ir}(2|z|\cos\phi) dr \, d\phi.
\]
When $\abs{r}$ is bounded, we estimate $h(r)$ trivially and use the fact
that
$K_{2ir}(x)\ll x^{-1/2}$ for all $r\in\mathbb{R}$ and $x>0$ real.
Thus the integral over $r$
contributes $O(|z|^{-1/2})$ in this range.
Now, for $r$ bounded away from zero, we approximate $h(r)$ by
\begin{equation}\label{1511:eq001}
h(r) = \frac{\cosh((\pi+2i\alpha)r)}{\cosh(\pi r)} + O(e^{-\frac{1}{2}\pi r}),
\end{equation}
and the error contributes again $O(|z|^{-1/2})$.
Note also that, after integrating over $\abs{r}=O(1)$, the fraction
in~\eqref{1511:eq001} is bounded by $O(|z|^{-1/2})$.
The remaining integral reads
\[
I
=
\frac{8}{\pi^2}\int_0^{\pi/2} \cos(2|z|\cos\theta\sin\phi)
\int_{-\infty}^{+\infty} r^2 \cosh((\pi+2i\alpha)r) K_{2ir}(2|z|\cos\phi)dr \, d\phi.
\]
The integral over $r$ can be evaluated exactly by
using the formula~\cite[6.795.1]{gradshteyn2007}. This gives
\[
\int_{-\infty}^{+\infty} r^2 \cosh((\pi+2i\alpha)r) K_{2ir}(2|z|\cos\phi)dr
=
-\frac{\pi}{8} \cdot \frac{\partial^2}{\partial b^2} \exp(-a\cosh b),
\]
where $a:=2|z|\cos\phi$ and $b:=\alpha-i\pi/2$ (in particular, $|\Im(b)|<\pi/2$).
Hence, we arrive at the expression
\[
\begin{split}
I
&=
-\frac{1}{\pi}
\int_0^{\pi/2} \cos(2|z|\cos\theta\sin\phi)
\frac{\partial^2}{\partial b^2} \exp(-a\cosh b) d\phi
\\
&=
-\frac{1}{\pi}
\int_0^{\pi/2} \cos(2|z|\cos\theta\sin\phi)
e^{-2|z|\cos\phi \cosh b}
\\
&\phantom{xxxxxxxxxxx}\times
\left(4|z|^2\cos^2\phi \sinh^{2}b + 2|z|\cos\phi \cosh b\right) d\phi.\rule{0pt}{14pt}
\end{split}
\]
Observe that $\Re(\cosh b)\asymp X^{1/2}T^{-1}$ and $|\cosh b|\asymp |\sinh b|\asymp X^{1/2}$.
In the range $\cos\phi>\log^2T/|z|\Re(\cosh b)$ we bound the integrand in
absolute value.
Since the exponential is $O(T^{-p})$ for arbitrarily large $p$,
the integral contributes $O(|z|^2X T^{-p})$.
On the other hand, when $\cos\phi\leq\log^2T/|z|\Re(\cosh
b)$, we integrate by parts in $\phi$ once.
This gives a factor $1/|z|\cosh b$ from the
exponential and thus the contribution from the integral
is
$O(T^{2+\eps}/|z|X^{1/2})$.
All in all, we have proved that, for $|z|\geq 1$, we can estimate
\[
\omega(z) \ll \frac{1}{|z|^{1/2}} + \frac{|z|^2X}{T^p} + \frac{T^{2+\epsilon}}{|z|X^{1/2}}.
\]
Summing this for $z=2\pi\bar{n}/c$ and $N(c)\leq 4\pi^2 N(n)$,
and using Weil's bound to estimate the Kloosterman sums,
we get a quantity not bigger than
\[
O( N^{1/2+\epsilon} + N^{1+\eps}X T^{-p} + N^{1/2+\epsilon}T^{2+\epsilon}
X^{-1/2} ).
\]
This is absorbed in the error in~\eqref{2211:lemma:eq}
since $V^\eps\ll T\leq V^{1/2-\eps}$ and $N$ satisfies~\eqref{NTX}
(in fact, this is the only place where we use this assumption).

It remains to estimate the portion
of $\mathcal{S}_n(\omega)$ where $N(c)>4\pi^2N(n)$,
i.e.~when $|z|<1$.
In this range we expand the $J$-Bessel functions in the
definition of $\omega$, \eqref{def:omega},
into power series (see~\eqref{eq:jseries}). We get
\begin{equation}\label{1511:eq003}
\begin{split}
\omega(z)
=
\sum_{k_1,k_2\geq 0}
&\frac{(-1)^{k_1+k_2}}{k_1!k_2!2^{2k_1+2k_2}}z^{2k_1}\bar{z}^{2k_2}
\\
&\times
\int_{-\infty}^{\infty} \frac{1}{\Gamma(k_1+1+ir)\Gamma(k_2+1+ir)}\left|\frac{z}{2}\right|^{2ir}\frac{ir^2h(r)}{\sinh(\pi r)}dr.
\end{split}
\end{equation}

By Stirling's formula it follows that, for any $k\geq 0$, we have
\begin{equation}\label{eq:gammaest}
\frac{1}{\Gamma(k+1+ir)}
\ll
e^{k-(k+1/2)\log(1+|r|) + \pi|r|/2}
\ll
\frac{e^{k+\pi|r|/2}}{(1+|r|)^{k+1/2}}
\end{equation}
Using~\eqref{eq:gammaest} and the fact that
$h(r)\ll e^{-|r|/T}$, we can bound all but the initial part of the double
sum in~\eqref{1511:eq003}
(that is, when $k_1+k_2\geq 1$) by
\[
\ll
|z|^2\sum_{k_1+k_2\geq 1} \frac{e^{k_1+k_2}}{k_1!k_2!2^{2k_1+2k_2}}
\int_{-\infty}^{+\infty} e^{-|r|/T}dr
\ll
T|z|^2.
\]
By Weil's bound, summing this for $N(c)>4\pi^2N(n)$ gives a contribution
of at most
$O(N^{1/2+\eps}T)$, which is absorbed in the error term in~\eqref{2211:lemma:eq}.
We are thus left with the term associated to $k_1=k_2=0$, which is precisely
$\omega_0(z)$.
\end{proof}

Next we evaluate $\omega_0$ with an explicit error term.
It turns out that a simple closed formula for $\omega_0$
can be given in terms of the $K$-Bessel function of order zero.
Estimates for $K_0$ and its derivative are collected in the following lemma.
\begin{lemma}\label{lemma:K0}
    Let $K_0(w)$ be the $K$-Bessel function of order zero and let $w\in\C\setminus\{0\}$ with $\Re(w)\geq 0$.
Then
\begin{equation}\label{K0:eq1}
|K_0(w)| \leq \frac{2}{|w|^{1/2}}\exp(-\Re(w)).
\end{equation}
Moreover, setting $f(w)=e^wK_0(w)$, we have
\begin{equation}\label{K0:eq2}
|f'(w)| \leq \frac{1}{|w|^{3/2}}\left(1+\frac{1}{|w|}\right).
\end{equation}
\end{lemma}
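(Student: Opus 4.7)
The plan is to base both bounds on a single factored integral representation of $K_0$. Starting from $K_0(w)=\int_0^\infty e^{-w\cosh t}\,dt$, valid for $\Re(w)>0$, the substitution $\cosh t=1+s/w$ (with $\sinh t\,dt=ds/w$) yields
\[
K_0(w)=\frac{e^{-w}}{\sqrt{2w}}\int_0^\infty \frac{e^{-s}}{\sqrt{s}}\,\frac{ds}{\sqrt{1+s/(2w)}},
\]
where both square roots are principal branches. The structural observation driving everything is that when $\Re(w)\geq 0$ and $s\geq 0$ we have $\Re(1+s/(2w))=1+s\,\Re(w)/(2|w|^2)\geq 1$, so $|1+s/(2w)|\geq 1$ and the integrand is dominated in modulus by $e^{-s}/\sqrt{s}$.

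For \eqref{K0:eq1} I would just bound the integral trivially: this gives
\[
|K_0(w)|\leq \frac{e^{-\Re(w)}}{\sqrt{2|w|}}\int_0^\infty \frac{e^{-s}}{\sqrt{s}}\,ds=\sqrt{\frac{\pi}{2|w|}}\,e^{-\Re(w)},
\]
and since $\sqrt{\pi/2}<2$ the inequality follows.

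For \eqref{K0:eq2} I would write $f(w)=(2w)^{-1/2}g(w)$ with $g(w)=\int_0^\infty e^{-s}s^{-1/2}(1+s/(2w))^{-1/2}\,ds$ and differentiate under the integral, producing
\[
g'(w)=\frac{1}{4w^2}\int_0^\infty e^{-s}\sqrt{s}\,(1+s/(2w))^{-3/2}\,ds.
\]
The same positivity argument gives $|g(w)|\leq\Gamma(1/2)=\sqrt{\pi}$ and $|g'(w)|\leq\Gamma(3/2)/(4|w|^2)=\sqrt{\pi}/(8|w|^2)$. Combining these with $f'(w)=-(2w)^{-3/2}g(w)+(2w)^{-1/2}g'(w)$ gives $|f'(w)|\leq \sqrt{\pi/8}\,|w|^{-3/2}+\sqrt{\pi/128}\,|w|^{-5/2}$; both constants are strictly less than $1$, so \eqref{K0:eq2} follows.

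The only subtlety I anticipate is the boundary case $\Re(w)=0$, where the substitution leading to the integral representation degenerates. I would handle this by continuity, using that both $K_0$ and $f'$ extend analytically to $\C\setminus(-\infty,0]$ and that the right-hand sides of \eqref{K0:eq1} and \eqref{K0:eq2} are continuous in $w\neq 0$; taking $\Re(w)\downarrow 0$ propagates both inequalities to the imaginary axis. Beyond this minor point I foresee no obstacles, since the key inequality $|1+s/(2w)|\geq 1$ is exactly what is needed to make the constants in both bounds cooperate without any extra work.
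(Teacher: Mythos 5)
Your proof is correct and takes essentially the same route as the paper: the factored representation you derive by substituting $\cosh t = 1+s/w$ into $\int_0^\infty e^{-w\cosh t}\,dt$ is precisely the integral formula the paper cites from Gradshteyn--Ryzhik, and both arguments then rest on the observation that $\Re(w)\geq 0$ forces $\bigl|1+s/(2w)\bigr|\geq 1$, followed by multiplying by $e^w$ and differentiating under the integral sign. The extra care you take with the explicit constants and with the boundary case $\Re(w)=0$ is sound but does not change the substance.
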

\begin{proof}
    The integral representation~\cite[8.432.8]{gradshteyn2007}
\[
K_\nu(w)
=
\sqrt{\frac{\pi}{2w}} \frac{e^{-w}}{\Gamma(\nu+1/2)}
\int_0^\infty e^{-t} t^{-1/2} \left(1+\frac{t}{w}\right)^{\nu-1/2} dt
\]
holds for $|\arg(w)|<\pi$ and $\Re(\nu)>-1/2$.
Since $\Re(w)\geq 0$, the estimate~\eqref{K0:eq1}
follows after bounding the integrand in absolute value.
Multiplying both sides by~$e^w$, differentiating in $w$ and bounding the result
yields~\eqref{K0:eq2}.
\end{proof}

The most important consequence of the simple closed
formula for $\omega_{0}$
is being able to see that $\omega_0(z)\ll|z|^{1/2+\eps}$ as $|z|\to 0$.
Such a decay is guaranteed \emph{a priori} by Kuznetsov's formula,
but is not directly visible in the definition of $\omega_0$
in~\eqref{def:omega0}.
The behaviour of $\omega_0(z)$ for small $z$ is made explicit in the following lemma,
which in turn allows us to replace the infinite sum
over $c$ by a finite sum.

\begin{lemma}\label{lemma:MellinK0}
Let $N\geq 1$ and $n\in\Z[i]$ with $N(n)\asymp N$,
and let $\omega_0$ be as in~\eqref{def:omega0}.
Then
\begin{equation}\label{2311:lemma:eq1}
\omega_0(z) = \frac{iM^2|z|^2X}{\pi} K_0(M|z|X^{1/2}) + O\left(\frac{|z|^{3/2}}{X^{1/4}}\right),
\end{equation}
where $M=e^{-i(\pi/2-1/2T)}$. In particular, we have
\begin{equation}\label{2311:eq003}
\mathcal{S}_n^\dagger(\omega_0)
=
\mathcal{S}_n^{\ddagger}(K_0)
+
O\left(N^{1/2+\eps}X^{1/2+\eps}\right),
\end{equation}
where $\mathcal{S}_n^{\ddagger}$ is a finite weighted sum
of Kloosterman sums given by
\begin{equation}\label{def:ddagger}
\mathcal{S}_n^{\ddagger}(K_0)
=
2iM^2N(n)X
\!\!\!\!\!\!
\sum_{C_1<N(c)\leq C_2}
\!\!\!\!\!\!
\frac{S_{\Q(i)}(n,n,c)}{N(c)^2} K_0\left(\frac{2\pi
M|n|X^{1/2}}{|c|}\right)
\end{equation}
with $C_1=N(n)V/T^2\log^2T$ and $C_2=N(n)V$.
\end{lemma}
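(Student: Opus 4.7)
The plan is to reduce $\omega_0(z)$ to the Mellin--Barnes representation of $K_0$, and then use Lemma~\ref{lemma:K0} to truncate the sum over $c$ in $\mathcal{S}_n^{\dagger}(\omega_0)$. First I would apply the reflection formula $\Gamma(1+ir)\Gamma(1-ir)=\pi r/\sinh(\pi r)$ to move $1/\Gamma(1+ir)^2$ to the numerator, together with the identity $\sinh(\pi r)h(r)=\sinh((\pi+2i\alpha)r)$, which after cancellation yields
\[
\omega_0(z) = \frac{i}{\pi^2}\int_{-\infty}^{\infty}\Gamma(1-ir)^2\bigl|z/2\bigr|^{2ir}\sinh((\pi+2i\alpha)r)\,dr.
\]
Splitting the $\sinh$ into its two exponentials produces two integrals each of the shape $\int\Gamma(1-ir)^2 e^{Ar}\,dr$, both of which converge absolutely because $\Im(\alpha)=1/(2T)>0$ supplies exponential decay at $\abs{r}\to\infty$ in each piece.

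For each of these integrals, the change of variable $s=1-ir$ transfers the integration to the vertical line $\Re(s)=1$, and matching with the Mellin--Barnes formula $K_0(x)=(4\pi i)^{-1}\int_{(1)}\Gamma(s)^2(x/2)^{-2s}ds$ evaluates it in closed form as a constant multiple of $e^{-iA}K_0(2e^{-iA/2})$. A short bookkeeping using $2\alpha=\log X+i/T$ identifies the argument of $K_0$: for the piece coming from $e^{(\pi+2i\alpha)r}$ one obtains $M|z|X^{1/2}$, producing the main term in \eqref{2311:lemma:eq1}; for the complementary piece from $e^{-(\pi+2i\alpha)r}$ one obtains $\overline{M}|z|X^{-1/2}$ with prefactor of size $|z|^2X^{-1}$. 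In both cases $|\arg|<\pi$, so the Mellin--Barnes formula applies. For the complementary piece, Lemma~\ref{lemma:K0} gives $|K_0(\overline{M}|z|X^{-1/2})|\ll X^{1/4}|z|^{-1/2}$, so it contributes $\ll|z|^{3/2}X^{-3/4}$, absorbed in $O(|z|^{3/2}X^{-1/4})$.

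For the second assertion \eqref{2311:eq003}, I would substitute \eqref{2311:lemma:eq1} into $\mathcal{S}_n^{\dagger}(\omega_0)$ and estimate three pieces: the remainder $O(|z|^{3/2}X^{-1/4})$ summed over $N(c)>4\pi^2N(n)$, and the two tails $N(c)\leq C_1$ and $N(c)>C_2$ of the $K_0$ main term. Using Weil's bound \eqref{weil} and $|z|=2\pi|n|/|c|$, the first contribution reduces to $X^{-1/4}N(n)^{3/4}\sum_{c}d(c)/N(c)^{5/4-\eps}$, which converges absolutely and is safely within the admissible error. For $N(c)>C_2=N(n)V$ one has $|z|X^{1/2}\ll 1$, and the $|w|^{-1/2}$ bound in \eqref{K0:eq1}, combined with the prefactor $|z|^2 X$ and Weil's estimate, yields a rapidly converging tail. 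For $N(c)\leq C_1=N(n)V/(T^2\log^2 T)$ one has $|z|X^{1/2}\gg T\log T$; since $\Re(M)=\sin(1/(2T))\asymp 1/T$, the exponential factor $e^{-\Re(M)|z|X^{1/2}}$ in \eqref{K0:eq1} grows rapidly as $N(c)$ shrinks and dominates the polynomial growth of the summand, yielding an admissible contribution.

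The main delicate step is the Mellin--Barnes identification, where one must carefully track the reversed contour orientation produced by $s=1-ir$, ensure that each exponential piece converges absolutely on the line $\Re(s)=1$, and verify that the resulting arguments $M|z|X^{1/2}$ and $\overline{M}|z|X^{-1/2}$ fall within the region $|\arg|<\pi$ where the Mellin--Barnes formula for $K_0$ is valid (which is guaranteed by $1/(2T)\ll\pi/2$). Once this identification is made, the rest of the argument is a routine combination of Lemma~\ref{lemma:K0} and Weil's bound.
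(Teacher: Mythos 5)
Your proposal follows essentially the same route as the paper: both use the reflection formula $\Gamma(1+ir)\Gamma(1-ir)=\pi r/\sinh(\pi r)$ to rewrite $\omega_0$ as two Mellin--Barnes integrals on a vertical line, identify them as $K_0$ evaluated at (constant multiples of) $M|z|X^{1/2}$ and $|z|X^{-1/2}$, discard the second via Lemma~\ref{lemma:K0}, and then truncate the $c$-sum using the exponential decay $e^{-\Re(w)}$ for $N(c)\le C_1$ and the $|w|^{-1/2}$ bound for $N(c)>C_2$, exactly as in the paper. The only caveat worth noting is that the vertical-line Mellin--Barnes representation of $K_0$ converges absolutely only for $|\arg w|<\pi/2$ (not $\pi$); this is satisfied here since $|\arg(M)|=\pi/2-1/(2T)<\pi/2$, so your argument goes through.
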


\begin{proof}
Let $2A=M|z|X^{1/2}$ and $2B=M|z|X^{-1/2}$.
Then, from
the definition of $h(r)$, together with the relation
\[
\frac{\pi r}{\sinh(\pi r)} = \Gamma(1+ir)\Gamma(1-ir),
\]
we deduce the identity
\begin{align*}
\omega_0(z)
&=
\frac{4A^2}{\pi^2} \int_{(1)} \Gamma(s)^2 A^{-2s} ds
+
\frac{4B^2}{\pi^2} \int_{(1)} \Gamma(s)^2 B^{-2s} ds
\\
&=
\frac{i(2A)^2}{\pi} K_0(2A)
+
\frac{i(2B)^2}{\pi} K_0(2B).
\end{align*}
The second equality follows from~\cite[\S7.3 (17)]{erdelyi_tables_1954}
(see also~\cite[17.43.32]{gradshteyn2007}).
Note that $\Re(2A),\Re(2B)\geq 0$.
Using~\eqref{K0:eq1}, and bounding the exponential
crudely by one, we see that
\begin{equation}\label{2311:eq002}
\left|B^2K_0(2B)\right| \ll |z|^{3/2}X^{-1/4}.
\end{equation}
This proves~\eqref{2311:lemma:eq1}. Also, summing~\eqref{2311:eq002}
over $N(c)>4\pi^2N(n)$ gives a quantity bounded by
\(
O(N^{1/2+\eps}X^{-1/4}),
\)
which is absorbed in the error term in~\eqref{2311:eq003}.
Similarly, from~\eqref{K0:eq1} we see that
\[
\left|A^2K_0(2A)\right| \ll |z|^{3/2}X^{3/4}
\exp\left(-\frac{|z|X^{1/2}}{100T}\right).
\]
Therefore, on summing over $4\pi^2N(n)<N(c)\leq C_1$ and $N(c)\geq C_2$ we obtain a quantity bounded by
\[
O(N^{1/2+\eps}X^{1/2+\eps}).\qedhere
\]
\end{proof}

We have now reduced the problem of estimating
$\mathcal{S}_{n}(\omega)$ to a matter of understanding a finite sum
of Kloosterman sums,
$\mathcal{S}_{n}^{\ddagger}(K_{0})$,
weighted by a $K$-Bessel function of order zero.
\begin{remark}
Notice that if we use the estimate~\eqref{K0:eq1} also in the remaining
range,
$C_1<N(c)\leq C_2$, we obtain $\mathcal{S}_n^\ddagger(K_0)\ll (NTX)^{1/2+\eps}$.
Collecting the errors from Lemma~\ref{lemma:omega0} and Lemma~\ref{lemma:MellinK0},
we see that this contribution
dominates. Therefore we would have
\begin{equation}\label{pointiwse-bound}
\mathcal{S}_n(\omega)\ll (NTX)^{1/2+\eps},
\end{equation}
which recovers the pointwise bound that appears
in~\cite[p.792]{koyama_prime_2001}.
The method in our proof is slightly
different at places
and provides additional details compared to~\cite{koyama_prime_2001}.
Moreover, Lemma~\ref{lemma:MellinK0} bypasses the use of the method of stationary phase,
giving instead a closed formula for the weight function.
\end{remark}

We will now study the second moment of
$\mathcal{S}_n^\ddagger(K_0)$.
By exploiting the oscillation in the weight function
$K_0(M|z|X^{1/2})$, we
can obtain additional decay when integrating in~$X$.
\begin{lemma}\label{lemma:productK0}
Let $M$ be as in Lemma~\ref{lemma:MellinK0}, and let $x_1,x_2$
be positive real numbers satisfying $1\gg x_1,x_2\gg V^{-1/2}$.
Then
\begin{equation}\label{lemma:product:eq}
\int_V^{V+Y} K_0(Mx_1X^{1/2})
\overline{K_0(Mx_2X^{1/2})} dX
\ll
\frac{\min(YV^{-1/2},L^{-1})}{(x_1x_2)^{1/2}},
\end{equation}
where $L=|x_1-x_2|$.
\end{lemma}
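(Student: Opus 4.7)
The plan is to write the integrand as a phase times a slowly varying amplitude and then bound it in two complementary ways. The trivial bound $\abs*{K_0(Mx_j X^{1/2})} \ll (x_j X^{1/2})^{-1/2}$ from \eqref{K0:eq1} already gives a pointwise bound $\ll (x_1 x_2)^{-1/2} X^{-1/2}$ on the integrand, whose integral over $[V,V+Y]$ is $\ll YV^{-1/2}(x_1 x_2)^{-1/2}$. The content of the lemma is therefore the second bound, obtained by exploiting the oscillation in $K_0$.

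To do this I would write $K_0(w) = e^{-w}f(w)$, where $f(w) = e^wK_0(w)$ satisfies $\abs{f(w)} \ll \abs{w}^{-1/2}$ and, by \eqref{K0:eq2}, $\abs{f'(w)} \ll \abs{w}^{-3/2}(1+\abs{w}^{-1})$ whenever $\Re(w)\geq 0$. Since $M = \sin(1/(2T)) - i\cos(1/(2T))$, the product of the two Bessel values becomes
\[
K_0(Mx_1X^{1/2})\overline{K_0(Mx_2X^{1/2})}
=
e^{(-\alpha+i\beta)X^{1/2}}\, f(Mx_1X^{1/2})\,\overline{f(Mx_2X^{1/2})},
\]
with $\alpha = \sin(1/(2T))(x_1+x_2) \geq 0$ and $\beta = \cos(1/(2T))(x_1-x_2)$, so in particular $\abs{\beta}\asymp L$.

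Next I would substitute $u = X^{1/2}$ to rewrite the integral as
\[
2\int_{V^{1/2}}^{(V+Y)^{1/2}} u\,F(u)\,e^{\mu u}\,du,
\qquad
\mu := -\alpha + i\beta,
\qquad
F(u) := f(Mx_1 u)\overline{f(Mx_2 u)},
\]
over an interval of length $\asymp YV^{-1/2}$. The hypothesis $x_j \gg V^{-1/2}$ combined with $u\geq V^{1/2}$ guarantees $\abs{Mx_j u}\gg 1$, so that $\abs{F(u)}\ll (x_1x_2)^{-1/2}u^{-1}$ and, differentiating once while using $\abs{f'(Mx_j u)}\ll (x_j u)^{-3/2}$, also $\abs{F'(u)} \ll (x_1x_2)^{-1/2}u^{-2}$. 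I would then integrate by parts against $e^{\mu u}/\mu$; the boundary term is $\ll \abs{\mu}^{-1} V^{1/2}\cdot (x_1x_2)^{-1/2}V^{-1/2}$, and the resulting integral of $\mu^{-1}(F(u)+uF'(u))e^{\mu u}$ contributes $\ll \abs{\mu}^{-1}(x_1x_2)^{-1/2}\log(1+Y/V)$. Since $\abs{\mu}\geq\abs{\beta}\asymp L$ and $Y\leq V$, both contributions are $\ll L^{-1}(x_1x_2)^{-1/2}$, and taking the minimum with the trivial estimate finishes the proof.

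The main obstacle I anticipate is purely one of bookkeeping: making sure the damping factor $e^{-\alpha u}$ from $\Re(\mu)\leq 0$ does not interfere with the integration by parts (it only helps, since $\alpha\geq 0$) and invoking the hypothesis $x_j\gg V^{-1/2}$ at exactly the right moment, to suppress the singular $\abs{w}^{-1}$ piece in the bound on $f'$. Beyond these routine checks, no genuine analytic difficulty is in the way.
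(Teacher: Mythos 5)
Your proposal is correct and follows essentially the same route as the paper: factor out $e^{-X^{1/2}(Mx_1+\overline{M}x_2)}$, use the bounds $|f(w)|\ll|w|^{-1/2}$ and $|f'(w)|\ll|w|^{-3/2}$ for $f(w)=e^wK_0(w)$ (valid since $x_ju\gg 1$), take the trivial bound for the first term of the minimum and integrate by parts against the phase for the second. The substitution $u=X^{1/2}$ and the explicit bookkeeping of the boundary and remainder terms are exactly the details the paper leaves implicit.
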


\begin{proof}
Consider the function $f(w)=e^wK_0(w)$. From Lemma~\ref{lemma:K0} we have
\begin{equation}\label{f:bound}
    |f(w)|\ll \frac{1}{|w|^{1/2}},\quad |f'(w)|\ll \frac{1}{|w|^{3/2}},
\end{equation}
for $\Re(w)\geq 0$ and $|w|$ bounded away from zero.
The integral in~\eqref{lemma:product:eq} can be written as
\begin{equation}\label{2611:eq001}
\int_V^{V+Y} e^{-X^{1/2}(Mx_1+\overline{M}x_2)} f(Mx_1X^{1/2})\overline{f(Mx_2X^{1/2})}dX.
\end{equation}
Bounding the integrand in absolute value and applying~\eqref{f:bound}
leads to the first term in the minimum in~\eqref{lemma:product:eq}.
The second term in~\eqref{lemma:product:eq} follows from integration by parts
and~\eqref{f:bound}.
\end{proof}
We are now ready to prove Proposition~\ref{proposition-kloosterman-sums}.

\subsection{Proof of Proposition~\ref{proposition-kloosterman-sums}}

By Lemma~\ref{lemma:omega0} and Lemma~\ref{lemma:MellinK0}, we have
\begin{equation}\label{2711:eq004}
\int_V^{V+Y} |\mathcal{S}_n(\omega)|^2 dX
\ll
\int_V^{V+Y} |\mathcal{S}_n^\ddagger(K_0)|^2 dX
+
O(YN^{1+\eps}V^{1+\eps}),
\end{equation}
where $\mathcal{S}_n^\ddagger(K_0)$ is as given in~\eqref{def:ddagger}.
From a dyadic decomposition
and the Cauchy--Schwarz inequality it follows that
we can bound the integral on the right-hand side by
\begin{equation}\label{prop21:eq001}
\begin{split}
&\ll N^{2+\eps} V^{2+\eps} \!\!\!\! \max_{C_1<R\leq C_2}
\int_V^{V+Y} \bigg| \sum_{N(c)\sim R} \!\!\!\!
\frac{S_{\Q(i)}(n,n,c)}{N(c)^2}K_0\left(\frac{2\pi M|n|X^{1/2}}{|c_1|}\right)\bigg|^2 dX
\\
&=
N^{2+\eps} V^{2+\eps} \!\!\!\! \max_{C_1<R\leq C_2} \sum_{c_1,c_2} \frac{S_{\Q(i)}(n,n,c_1)S_{\Q(i)}(n,n,c_2)}{N(c_1c_2)^2}
\\
&\phantom{xxxxxxxx}\times
\int_V^{V+Y} K_0\left(\frac{2\pi M|n|X^{1/2}}{|c_1|}\right)\overline{K_0\left(\frac{2\pi M|n|X^{1/2}}{|c_2|}\right)} dX,
\end{split}
\end{equation}
where the sum over $c_1,c_2$ is restricted to $R<N(c_1),N(c_2)\leq 2R$.
Note that in this range the numbers $x_j=2\pi|n|/|c_j|$ satisfy the inequality $1\gg x_j\gg V^{-1/2}$,
for $j=1,2$, and we can therefore bound the integral in~\eqref{prop21:eq001}
by using Lemma~\ref{lemma:productK0}.
For $x_1=x_2$, i.e.~$|c_1|=|c_2|$, we use the factor $YV^{-1/2}$ in the minimum
in~\eqref{lemma:product:eq}.
This, coupled with Weil's bound for $S_{\Q(i)}(n,n,c)$, leads to the
following estimate for the diagonal part of the sum:
\begin{equation}\label{2711:eq002}
\ll YN^{3/2+\eps}V^{3/2} \!\!\!\!\!\! \sum_{C_1<N(c)\leq C_2} \frac{|(n,c)|^2r_2(N(c))}{N(c)^{5/2-\eps}} \ll T^3Y (NV)^\eps.
\end{equation}
Here we use $r_2(n)$ to denote the
number of ways of writing $n$ as a sum of two squares,
along with the standard estimate $r_2(n)\ll n^\eps$.
For the off-diagonal terms
in~\eqref{prop21:eq001} (when $|c_1|\neq|c_2|$)
we use again Lemma~\ref{lemma:productK0}.
For technical convenience we interpolate
the two bounds in the minimum with the exponents $(\eps,1-\eps)$,
which gives
\[
\min(YV^{-1/2},L^{-1}) \ll V^\eps L^{-1+\eps}.
\]
Inserting this into~\eqref{prop21:eq001}, we can estimate
the double sum over $|c_1|\neq |c_2|$ by
\begin{equation}\label{2711:eq001}
    \ll N^{1+\eps} V^{2+\eps} \!\!\!\!\! \sum_{R\leq
    \ell_1\neq \ell_2\leq 2R}
    \frac{a_{\ell_{1}}a_{\ell_{2}}}{|\ell_{1}-\ell_{2}|^{1-\eps}},
\end{equation}
where $\ell_{j}=N(c_j)$ and the coefficients $a_{\ell}$ are given by
\[
a_{\ell}: = \sum_{N(c)=\ell} \frac{|S_{\Q(i)}(n,n,c)|}{N(c)^{1+\eps}}.
\]
Using the Hardy--Littlewood--P\'olya inequality~\cite[Th.~381, p.~288]{hardy_inequalities_1934}
and Weil's bound~\eqref{weil} for the Kloosterman sums, we can bound~\eqref{2711:eq001} by
\begin{equation}\label{2711:eq003}
    \ll N^{1+\eps} V^{2+\eps} \sum_{\ell\sim R}
    a_{\ell}^{2}
\ll N^{1+\eps} V^{2+\eps} \sum_{c\neq 0} \frac{|(n,c)|^2r_2(N(c))}{N(c)^{1+\eps}} \ll N^{1+\eps}V^{2+\eps},
\end{equation}
uniformly in $R$.
Since the above bound dominates the last term in~\eqref{2711:eq004},
combining~\eqref{2711:eq003} with~\eqref{2711:eq002} we deduce that
\[
\int_V^{V+Y} |\mathcal{S}_n(\omega)|^2 dX \ll N^{1+\eps} V^{2+\eps} + T^3Y
(NV)^\eps,
\]
which is what we wanted to prove.
\qed

\section{Average of Rankin--Selberg $L$-functions}\label{S4}

In this section we prove Proposition~\ref{proposition-average-rankin-selberg}.
As in section~\ref{S3}, we take real numbers $T$, $X$, $V$ and $Y$
satisfying the inequalities in~\eqref{0412:eq001}.
Moreover, we assume that $s$
is a complex number with $\Re(s)=1/2$.

First, we note that $h(r)=X^{ir}e^{-r/T} +O(e^{-\pi r})$.
Therefore,
using the fact that the Rankin--Selberg $L$-function
is bounded polynomially in both $r_j$ and $s$,
we can write
\[
M_1(s) = \sum_{r_j} X^{ir_j} e^{-r_j/T} L(s,u_j\otimes u_j) + O(|s|^A).
\]
We decompose the sum on the right-hand side
into intervals of length $T$ and use the Cauchy--Schwarz
inequality to get
\begin{equation}\label{0412:eq003}
|M_1(s)|^2
\ll
\sum_{m=1}^\infty m^2 \Big|\sum_{(m-1)T<r_j\leq mT} \!\!\!\! X^{ir_j} L_j \Big|^2 + O(|s|^{2A}),
\end{equation}
where we
use the shorthand $L_j=L(s,u_j\otimes u_j)$.
We want to integrate over $X\in [V,V+Y]$
in~\eqref{0412:eq003}. Thus we need to
understand the integral
\[
    I = \int_V^{V+Y} \Big|\sum_{(m-1)T<r_j\leq mT} \!\!\!\! X^{ir_j} L_j \Big|^2 dX.
\]
Opening up the square and integrating directly
yields
\begin{align}\label{eq:Iest}
I
&\ll
V T^\eps e^{-2m} \!\!\!\! \sum_{(m-1)T<r_j,r_k\leq mT} \frac{|L_jL_k|}{1+|r_j-r_k|}
\notag\\
&\ll
V T^\eps e^{-2m} \!\!\!\! \sum_{(m-1)T<r_j\leq mT} \!\!\!\! |L_j|^2 \!\!\!\! \sum_{(m-1)T<r_k\leq mT} \frac{1}{1+|r_j-r_k|}.
\end{align}
The Weyl law (with remainder) on
$\hmodgs$~\cite[Theorem~2]{bonthonneau_weyl_2015} implies
the estimate $\#\{T<r_j\leq T+1\}\ll T^2$ in unit intervals
(see~\cite[(2.1)]{balkanova_prime_2017}).
This gives, for $r_j\leq mT$,
\begin{equation}\label{eq:sumbound1}
\sum_{(m-1)T<r_k\leq mT} \frac{1}{1+|r_j-r_k|} \ll (mT)^{2+\eps}.
\end{equation}
Now, in order to estimate the remaining sum over
$r_{j}$ in~\eqref{eq:Iest}, we use the relation between the Rankin--Selberg $L$-function
and the symmetric square $L$-function, i.e.
\[
L(s,u_j\otimes u_j) = |v_j(1)|^2 \frac{\zeta_{\Q(i)}(s)}{\zeta_{\Q(i)}(2s)}
L(s,\mathrm{sym}^2 u_j),
\]
where $\zeta_{\Q(i)}$ is the Dedekind zeta function of $\Q(i)$.
The bound $v_j(1)\ll r_j^\eps$
(see~\cite[Proposition~3.1]{koyama_prime_2001})
together with the second moment estimate~\cite[Theorem~3.3]{balkanova_prime_2017}
give
\begin{equation}\label{eq:sumbound2}
\sum_{(m-1)T<r_j\leq mT} |L_j|^2 \ll |s|^A (mT)^{4+\eps}.
\end{equation}
After substituting the estimates~\eqref{eq:sumbound1} and~\eqref{eq:sumbound2}
    into~\eqref{eq:Iest}, we finally obtain
\[
I \ll V T^{6+\eps} m^{8+\eps} e^{-2m}.
\]
Using this with~\eqref{0412:eq003} and summing over $m$
leads us to the desired bound.
\qed

\section{Recovering Theorem~\ref{intro:thm1}}\label{S5}

Finally, we show how to recover Theorem~\ref{intro:thm1} from Theorem~\ref{intro:thm2}.
The error term $E_{\Gamma}(X)$ is related to the spectral exponential
sum $S(T,X)$ via the explicit formula. For $\pslzi$, this was proved by
Nakasuji~\cite[Thm. 4.1]{nakasuji_prime_2001}, who showed that
\begin{equation}\label{eq:explicit}
E_\Gamma(X)
=
2\Re\left(\sum_{0<r_j\leq T}\frac{X^{1+ir_j}}{1+ir_j}\right) + O\left(\frac{X^2}{T}\log X\right),
\end{equation}
for $1 \leq T < X^{1/2}$.
Thus, we can estimate the second moment of $E_\Gamma(X)$ as
\[
\frac{1}{Y} \int_V^{V+Y} |E_\Gamma(X)|^2dX
\ll
\frac{1}{Y} \int_V^{V+Y}  \bigg|\sum_{0<r_j\leq T}\frac{X^{1+ir_j}}{1+ir_j}\bigg|^2 dX
+
O\left( \frac{V^4 }{T^2} \log^2 V \right).
\]
Using partial summation, we write the exponential sum as
\[
\sum_{0<r_j\leq T} \frac{X^{1+ir_j}}{1+ir_j}
=
\frac{X}{1+iT} S(T,X) + iX \int_{1}^{T} \frac{S(U,X)}{(1+iU)^2} dU,
\]
and by a repeated use of the Cauchy--Schwarz inequality we obtain
\[
\begin{split}
\int_V^{V+Y}  \bigg| \sum_{0<r_j\leq T}\frac{X^{1+ir_j}}{1+ir_j}\bigg|^2 dX
&\ll
\frac{V^2}{T^2} \int_{V}^{V+Y}  |S(T,X)|^2 dX
\\
&+
V^2 \log T \int_1^T \frac{1}{|1+iU|^3} \int_{V}^{V+Y} |S(U,X)|^2 dX \; dU.
\end{split}
\]
We apply Theorem~\ref{intro:thm2} and bound the right-hand side by
\[
\ll V^{7/2+\epsilon} T^{1+\epsilon} +  V^{7/2+\eps} T^\eps \int_1^T dU
\ll V^{7/2+\epsilon} T^{1+\epsilon}.
\]
Thus
\[
\frac{1}{Y} \int_V^{V+Y} |E_\Gamma(X)|^2dX \ll  V^{7/2+\epsilon} T^{1+\epsilon}
Y^{-1}+  \frac{V^4 }{ T^2} \log^2 V.
\]
Balancing with $T = V^{1/6} Y^{1/3}$ completes
the proof of Theorem~\ref{intro:thm1}.\qed


\end{document}